\providecommand\@dotsep{5}
\def\listtodoname{List of Todos}
\def\listoftodos{\@starttoc{tdo}\listtodoname}
\newcommand{\R}{{\mathbb R}}
\newcommand{\N}{{\mathbb N}}
\renewcommand{\O}{{\mathcal O}}
\renewcommand{\S}{{\mathcal S}}
\newcommand{\M}{{\mathcal M}}
\renewcommand{\phi}{{\varphi}}
\newcommand{\eps}{{\varepsilon}}
\renewcommand{\epsilon}{{\varepsilon}}
\renewcommand{\theta}{{\vartheta}}
\numberwithin{equation}{section}
\newtheorem{theorem}{Theorem}[section]
\newtheorem{lemma}[theorem]{Lemma}
\newtheorem{corollary}[theorem]{Corollary}
\newtheorem{proposition}[theorem]{Proposition}
\newtheorem{definition}[theorem]{Definition}
\begin{document}

\title{The Brezis-Nirenberg problem for nonlocal systems}

\author[L.F.O.\ Faria]{L.F.O.\ Faria}
\author[O.H.\ Miyagaki]{O.H.\ Miyagaki}
\author[F.R.\ Pereira]{F.R.\ Pereira}
\author[M.\ Squassina]{M.\ Squassina}
\author[C.\ Zhang]{C.\ Zhang}

\address[L.F.O. Faria, O.H.\ Miyagaki, F.R. Pereira]{Departamento de Matem\'atica
\newline \indent
Universidade Federal de Juiz de Fora
\newline\indent
Juiz de Fora, CEP 36036-330, Minas Gerais, Brazil}
\email{luiz.faria@ufjf.edu.br,ohmiyagaki@gmail.com,fabio.pereira@ufjf.edu.br}

\address[M. Squassina]{Dipartimento di Informatica
\newline \indent
Universit\`a degli Studi di Verona
\newline\indent
Strada Le Grazie 15, I-37134 Verona, Italy}
\email{marco.squassina@univr.it}

\address[C.\ Zhang]{Chern Institute of Mathematics 
	\newline \indent
	Nankai University
	\newline\indent
	 Tianjin 300071, China}
\email{1120140009@mail.nankai.edu.cn}


\thanks{L.F.O.\ Faria was supported by Fapemig CEX APQ-02406-12, 
	O.H.\ Miyagaki by CNPq/Brazil and
	CAPES/Brazil (Proc 2531/14-3) and 
	F.R.\ Pereira by Fapemig/Brazil (CEX APQ 00972/13) and 
	by the program CAPES/Brazil (Proc 99999.007090/2014-05).
	Moreover, M.\ Squassina was partially supported by PROPG/UFJF-Edital 02/2014.\ 
	This paper was partly developed 
	while the second author was visiting the Department of Mathematics of Rutgers University and the fourth author was visiting the University of Juiz de Fora. The hosting  institutions  are  gratefully  acknowledged.}

\subjclass[2000]{47G20, 35J50, 35B65}
\keywords{Fractional systems, Brezis-Nirenberg problem,
	nonexistence, regularity}

\begin{abstract}
By means of variational methods
we investigate existence, non-existence as well as regularity of weak solutions
for a system of nonlocal equations involving the fractional laplacian
operator and with nonlinearity reaching the critical growth and interacting, in
a suitable sense, with the spectrum of the operator.
\end{abstract}

\maketitle

\begin{center}
	\begin{minipage}{9.5cm}
		\small
		\tableofcontents
	\end{minipage}
\end{center}

%

\section{Introduction and results}
\noindent
Let $\Omega\subset\R^{N}$ be a
smooth bounded domain.\ In 1983, Br\'ezis and Nirenberg, in the seminal paper \cite{bn},
showed that the critical growth semi-linear problem
\begin{equation}\label{eq0.0}
\left\{
\begin{array}{ll}
-\Delta u= \lambda u + u^{2^*-1} & \; \mbox{in $\Omega$}, \\
u>0 &\;\mbox{in $\Omega$}, \\
u=0 &\; \mbox{on $\partial\Omega$}, \\
\end{array}
\right.
\end{equation}
admits a solution provided that $\lambda\in (0,\lambda_1)$ and $N\geq 4$,
$\lambda_1$ being the first eigenvalue of $-\Delta$ with homogeneous 
Dirichlet boundary conditions
and $2^* = 2N/(N-2)$ the critical Sobolev exponent. Furthermore, in dimension $N=3$, the same existence result holds provided that $\mu< \lambda<\lambda_1$, for a suitable $\mu>0$ (if $\Omega$ is a ball, then $\mu=\lambda_1/4$ is sharp). By Poho\v zaev identity, if $\lambda \not\in (0,\lambda_1)$ and $\Omega$ is a star-shaped domain, then problem \eqref{eq0.0}
admits no solution. Later on, in 1984, Cerami, Fortunato and Struwe
obtained in \cite{cfs} multiplicity results for the nontrivial solutions of 
\begin{equation}\label{eq0.0-sign}
\left\{
\begin{array}{ll}
-\Delta u= \lambda u + u^{2^*-1} & \; \mbox{in $\Omega$}, \\
u=0 &\; \mbox{on $\partial\Omega$}, \\
\end{array}
\right.
\end{equation}
 when $\lambda$ belongs to a left neighborhood of an
eigenvalue of $-\Delta$.\ In 1985, Capozzi, Fortunato and Palmieri
proved in \cite{cfp} the existence of a nontrivial solution of \eqref{eq0.0-sign} for all
$\lambda >0$ and $N\geq 5$ or for $N\geq 4$ and $\lambda$ different 
from an eigenvalue of $-\Delta$. 
Let $s\in (0,1)$ and  $N>2s$. The aim of 
this paper is to obtain a Br\'ezis-Nirenberg type result  
for the following fractional system
\begin{equation}\label{eq:1.1}
\left\{
\begin{aligned}
(-\Delta)^s u &=au+bv+\frac{2p}{p+q}|u|^{p-2}u |v|^q   && \text{in $\Omega$,}\\
(-\Delta)^s v &=bu+cv+\frac{2q}{p+q}|u|^p |v|^{q-2}v   && \text{in $\Omega$,}\\
 u=&\, v=0 && \text{in $\R^N\setminus\Omega$,}
\end{aligned}
\right.
\end{equation}
where $(-\Delta)^{s}$ is defined, on smooth functions, by
\[
(-\Delta)^su(x):=C(N,s)\lim_{\eps\searrow 0}\int_{\R^N\setminus B_\eps(x)}\frac{u(x)-u(y)}{|x-y|^{N+2s}}\,dy, \quad\,\, x\in\R^N,
\]
$C(N,s)$ being a suitable positive constant and $p,q>1$ are
such that $p+q$ is compared to $ 2^*_s:=2N/(N-2s)$,
the fractional critical Sobolev  exponent \cite{nezza}.
The corresponding system in the local case was studied in \cite{alves}.
For positive solutions, system \eqref{eq:1.1} turns into
\begin{equation}\label{eq:1.1+}
\left\{
\begin{aligned}
(-\Delta)^s u &=au+bv+\frac{2p}{p+q}u^{p-1} v^q   && \text{in $\Omega$,}\\
(-\Delta)^s v &=bu+cv+\frac{2q}{p+q}u^p v^{q-1}   && \text{in $\Omega$,}\\
 u>0&,\,\, v>0   && \text{in $\Omega$,} \\
 u=&\, v=0 && \text{in $\R^N\setminus\Omega$.}
\end{aligned}
\right.
\end{equation}
In the following we shall assume that
$\Omega$ is a smooth bounded domain of $\R^N$ with $N>2s$
and we shall denote by $(\lambda_{i,s})$ the 
sequence of eigenvalues of $(-\Delta)^s$ with
homogeneous Dirichlet type boundary condition and by $\mu_1$ and $\mu_2$ the real eigenvalues of the matrix
$$
A:=\left(
                               \begin{matrix}
                                 a & b \\
                                 b & c \\
                               \end{matrix}
                             \right), \,\,\quad a,b,c\in\R.
 $$
 Without loss of generality, we will assume $\mu_1\leq \mu_2$.\ 
 By solution we shall always mean {\em weak} 
 solution in the sense specified in Section~\ref{preliminar}, where the functional
 space $X(\Omega)$ is fully described.
 It is known that the first eigenvalue
 $\lambda_{1,s}$ is positive, simple and characterized by
 \begin{equation}
 \label{lambda1}
 \lambda_{1,s}=\inf_{u\in X(\Omega)\setminus\{0\}}
 \frac{\displaystyle\int_{\R^{N}}|(-\Delta)^{\frac{s}{2}} u|^2dx}{\displaystyle\int_{\R^{N}}|u|^{2}dx}.
 \end{equation}
\vskip6pt
\noindent
The following are the main results of the paper.

\begin{theorem}[Existence I]\label{teo1}
Assume that $b\geq 0$, $\mu_2<\lambda_{1,s}$ and 
$$
p+q<2^{*}_{s}. 
$$
Then \eqref{eq:1.1+} admits a solution.
\end{theorem}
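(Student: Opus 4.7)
The plan is to find a critical point of the $C^{1}$ functional
\begin{equation*}
I(u,v)=\tfrac{1}{2}\bigl(\|u\|^{2}+\|v\|^{2}\bigr)-\tfrac{1}{2}\int_{\Omega}(au^{2}+2buv+cv^{2})\,dx-\tfrac{2}{p+q}\int_{\Omega}(u^{+})^{p}(v^{+})^{q}\,dx
\end{equation*}
on $X(\Omega)\times X(\Omega)$, with $\|u\|^{2}=\int_{\R^{N}}|(-\Delta)^{s/2}u|^{2}dx$, by means of the Mountain Pass Theorem. The positive-part truncation is inserted so that nontrivial critical points are automatically nonnegative. Since $p+q<2^{*}_{s}$, the fractional Sobolev embedding $X(\Omega)\hookrightarrow L^{p+q}(\Omega)$ is compact; hence $I\in C^{1}$ and the Palais--Smale condition is standard, boundedness of PS sequences being obtained from the combination $I(u_{n},v_{n})-\tfrac{1}{p+q}\langle I'(u_{n},v_{n}),(u_{n},v_{n})\rangle$ (using $p+q>2$), and strong convergence from the compact embedding.

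For the Mountain Pass geometry the hypothesis $\mu_{2}<\lambda_{1,s}$ is decisive. Since $\mu_{2}$ is the largest eigenvalue of $A$, the pointwise inequality $a\xi^{2}+2b\xi\eta+c\eta^{2}\le\mu_{2}(\xi^{2}+\eta^{2})$ together with \eqref{lambda1} yields
\begin{equation*}
\int_{\Omega}(au^{2}+2buv+cv^{2})\,dx\le\frac{\mu_{2}}{\lambda_{1,s}}\bigl(\|u\|^{2}+\|v\|^{2}\bigr).
\end{equation*}
Combined with the H\"older--Sobolev bound $\int_{\Omega}(u^{+})^{p}(v^{+})^{q}dx\le C(\|u\|^{2}+\|v\|^{2})^{(p+q)/2}$, and since $p+q>2$, this gives $I(u,v)\ge\alpha>0$ on a small sphere $\|(u,v)\|=\rho$. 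Taking any pair $(u_{0},v_{0})\in X(\Omega)\times X(\Omega)$ strictly positive in $\Omega$ (for instance both equal to the first eigenfunction of $(-\Delta)^{s}$) one has $\int u_{0}^{p}v_{0}^{q}dx>0$ and hence $I(tu_{0},tv_{0})\to-\infty$ as $t\to+\infty$. The Mountain Pass Theorem then produces a nontrivial critical point $(u,v)$.

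Positivity is the main technical step. Testing the Euler--Lagrange equations with $-u^{-}$ and $-v^{-}$, using the nonlocal inequality $\int(-\Delta)^{s/2}w\cdot(-\Delta)^{s/2}(-w^{-})\,dx\ge\|w^{-}\|^{2}$ (which follows from $w^{+}(x)w^{-}(y)\ge 0$ in the double integral representation of the bilinear form), and noting that $(u^{+})^{p-1}(v^{+})^{q}u^{-}\equiv 0$ pointwise, a straightforward summation yields
\begin{equation*}
\|u^{-}\|^{2}+\|v^{-}\|^{2}\le a\|u^{-}\|_{2}^{2}+c\|v^{-}\|_{2}^{2}+2b\int_{\Omega}u^{-}v^{-}\,dx-b\int_{\Omega}(u^{+}v^{-}+v^{+}u^{-})\,dx.
\end{equation*}
The cooperativity hypothesis $b\ge 0$ makes the last integral nonpositive, so it can be dropped, and the quadratic-form estimate already used forces $(1-\mu_{2}/\lambda_{1,s})(\|u^{-}\|^{2}+\|v^{-}\|^{2})\le 0$, hence $u,v\ge 0$. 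Neither component can vanish identically: if $u\equiv 0$ the first equation gives $bv\equiv 0$, so either $v\equiv 0$ (contradicting nontriviality) or $b=0$ and then testing $(-\Delta)^{s}v=cv$ against $v$ together with \eqref{lambda1} forces $v\equiv 0$ because $c\le\mu_{2}<\lambda_{1,s}$. The strong maximum principle for $(-\Delta)^{s}$ finally upgrades $u,v\ge 0$ to $u,v>0$ in $\Omega$. The principal obstacle throughout is precisely this nonlocal positivity argument, in which one cannot reason pointwise on the equations and the sign condition $b\ge 0$ is indispensable.
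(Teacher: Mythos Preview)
Your proof is correct but takes a genuinely different route from the paper. The paper proves Theorem~\ref{teo1} by constrained minimization: it minimizes the quadratic functional
\[
I(U)=\tfrac{1}{2}\|U\|_Y^{2}-\tfrac{1}{2}\int_{\Omega}(AU,U)_{\R^{2}}\,dx
\]
over the manifold $\mathcal{M}=\{(u,v)\in Y(\Omega):\int_{\Omega}(u^{+})^{p}(v^{+})^{q}\,dx=1\}$, uses the compact subcritical embedding to pass to the limit, then applies a Lagrange multiplier and rescales by homogeneity to produce a solution of \eqref{eq:1.1+}. You instead run the Mountain Pass Theorem directly on the full truncated energy, getting the Palais--Smale condition from compactness of $X(\Omega)\hookrightarrow L^{p+q}(\Omega)$. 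The sign analysis (testing against negative parts and exploiting $b\ge 0$ together with $\mu_{2}<\lambda_{1,s}$), the exclusion of semi-trivial pairs, and the final appeal to the strong maximum principle are essentially the same in both arguments. Your approach has the advantage of being uniform with the critical case in Section~5 (where the paper itself switches to Mountain Pass) and of bypassing the Lagrange-multiplier-plus-rescaling step; the paper's direct-method argument, on the other hand, is slightly more elementary in that it avoids any deformation lemma and yields the solution as a minimizer of a natural Rayleigh-type quotient.
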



\begin{theorem}[Existence II]
\label{teo2}
Assume that $b\geq 0$ and 
$$
p+q=2^{*}_{s}.
$$ 
Then the following facts hold.
\begin{enumerate}
\item If $N\geq 4s$ and $0<\mu_1\leq \mu_2<\lambda_{1,s}$ then \eqref{eq:1.1+} admits a solution; 
\item If $2s<N<4s$, there is $\bar{\mu}>0$ such that \eqref{eq:1.1+} admits a solution if $\bar{\mu}<\mu_1\leq \mu_2<\lambda_{1,s}$.
\end{enumerate}
\end{theorem}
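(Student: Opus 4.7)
The plan is to apply a mountain pass argument to the energy functional associated with \eqref{eq:1.1+}, namely
\[
J(u,v)=\frac12\int_{\R^N}\bigl(|(-\Delta)^{s/2}u|^2+|(-\Delta)^{s/2}v|^2\bigr)dx-\frac12\int_\Omega(au^2+2buv+cv^2)dx-\frac{2}{p+q}\int_\Omega(u^+)^p(v^+)^q\,dx,
\]
defined on $X(\Omega)\times X(\Omega)$, and then to upgrade the resulting nonnegative critical point to a positive solution via the strong maximum principle for $(-\Delta)^s$.

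\textbf{Geometry and compactness.} Since $\mu_2<\lambda_{1,s}$, diagonalizing the matrix $A$ and using \eqref{lambda1} show that the quadratic part of $J$ is coercive on $X(\Omega)\times X(\Omega)$, so $J$ has a strict positive local minimum at the origin. Along any direction $(\phi,\psi)$ with $\phi,\psi$ positive and compactly supported in $\Omega$, one has $J(t\phi,t\psi)\to-\infty$ because $p+q=2^*_s>2$. Introducing the system Sobolev quotient
\[
S_{p,q}:=\inf\Biggl\{\frac{\int_{\R^N}(|(-\Delta)^{s/2}u|^2+|(-\Delta)^{s/2}v|^2)dx}{\bigl(\int_{\R^N}|u|^p|v|^q dx\bigr)^{2/2^*_s}}\,:\,\int_{\R^N}|u|^p|v|^q\,dx>0\Biggr\},
\]
a Brezis--Lieb/concentration-compactness argument adapted to the fractional setting yields the Palais--Smale condition at every level $c<c^*:=\tfrac{s}{N}S_{p,q}^{N/(2s)}$.

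\textbf{Energy level estimate.} Let $(\alpha,\beta)$ be a positive eigenvector of $A$ corresponding to $\mu_1$ and let $U_\eps$ denote the standard cutoff of the fractional Aubin--Talenti extremal concentrating at a fixed interior point of $\Omega$. Testing along the path $t\mapsto(t\alpha U_\eps,t\beta U_\eps)$ and combining the known asymptotics
\[
\int_{\R^N}|(-\Delta)^{s/2}U_\eps|^2dx = S_s^{N/(2s)}+O(\eps^{N-2s}),\qquad \int_\Omega U_\eps^{2^*_s}dx=S_s^{N/(2s)}+O(\eps^N),
\]
together with
\[
\int_\Omega U_\eps^2\,dx\simeq \begin{cases}\eps^{2s}& N>4s,\\ \eps^{2s}|\log\eps|& N=4s,\\ \eps^{N-2s}& 2s<N<4s,\end{cases}
\]
one obtains, after maximising in $t$,
\[
\max_{t\geq 0}J(t\alpha U_\eps,t\beta U_\eps)\leq c^*+C_1\eps^{N-2s}-C_2\,\mu_1\int_\Omega U_\eps^2\,dx+o(\eps^{N-2s})
\]
for suitable positive constants $C_1,C_2$ depending on $s,N,p,q,\alpha,\beta$. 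For $N\geq 4s$ the $L^2$ term dominates, so any $\mu_1>0$ forces the level to lie strictly below $c^*$; for $2s<N<4s$ both terms are of order $\eps^{N-2s}$ and one needs $\mu_1>\bar{\mu}:=C_1/C_2$. The mountain pass theorem then furnishes a critical point $(u,v)$ with $J'(u,v)=0$ and $J(u,v)<c^*$.

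\textbf{Positivity and main obstacle.} Since only $u^+,v^+$ enter the nonlinearity, using $(u^-,0)$ and $(0,v^-)$ as test functions (and exploiting $b\geq 0$ to control the cross term) forces $u,v\geq 0$. Applying the strong maximum principle for $(-\Delta)^s$ to each equation, whose right-hand side is nonnegative thanks to $b\geq 0$ and the nonnegativity of the other component, upgrades this to $u,v>0$ in $\Omega$, giving a solution to \eqref{eq:1.1+}. The genuinely delicate step is the energy expansion in the regime $2s<N<4s$: one must track the \emph{explicit} constants in the fractional Aubin--Talenti expansion to identify $\bar\mu$, and carefully choose $(\alpha,\beta)$ so that the quadratic form $A$ contributes through $\mu_1(\alpha^2+\beta^2)\int U_\eps^2$ while the coupling $\alpha^p\beta^q$ realises exactly the ratio relating $S_{p,q}$ to the fractional Sobolev constant $S_s$; a secondary difficulty is that the fractional concentration-compactness needed for the $(PS)_c$ threshold is more subtle than in the local case because of the nonlocality of the energy.
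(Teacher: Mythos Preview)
Your overall architecture---Mountain Pass geometry, a compactness threshold expressed through the system Sobolev constant, an energy estimate along a one-parameter family built from the fractional Talenti bubble, and positivity via the strong maximum principle---is exactly the paper's route. There is, however, a concrete obstruction in your choice of test direction that you flag but do not resolve, and its resolution is simpler than you suggest.

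You take $(\alpha,\beta)$ to be a \emph{positive eigenvector of $A$ for $\mu_1$}. When $b>0$ no such vector exists: the eigenvector for the smaller eigenvalue of $\begin{pmatrix}a&b\\ b&c\end{pmatrix}$ is, up to scaling, $(b,\mu_1-a)$, and since $\mu_1<a$ whenever $b>0$ its components have opposite signs. (When $b=0$ the eigenvectors are the coordinate axes, so one component vanishes and $\alpha^p\beta^q=0$.) More importantly, the leading term of your expansion equals $c^*$ only if $(\alpha^2+\beta^2)/(\alpha^p\beta^q)^{2/2^*_s}$ realises the constant linking $S_{p,q}$ to $S_s$, which forces $\alpha/\beta=\sqrt{p/q}$; this is incompatible with the eigenvector condition in general. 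The paper sidesteps this tension entirely: it fixes $B/C=\sqrt{p/q}$ (so that the leading term is exactly the threshold, via Lemma~\ref{relation}) and replaces the eigenvector identity by the \emph{inequality} $(A(u,v),(u,v))\ge\mu_1(u^2+v^2)$, which is all one needs for an upper bound on $J$. With that choice the expansion you wrote goes through verbatim and your identification of $\bar\mu$ in the low-dimensional case is correct in spirit.

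Two smaller remarks. First, with the functional normalised as in the paper (coefficient $2/2^*_s$ in front of the coupling term) the correct threshold is $\dfrac{2s}{N}\Bigl(\dfrac{S_{p,q}}{2}\Bigr)^{N/2s}$, not $\dfrac{s}{N}S_{p,q}^{N/2s}$; the factor of $2$ matters when you match constants. Second, the paper does not establish the Palais--Smale \emph{condition} below the threshold; it only shows that the weak limit of a PS sequence is a critical point with nonnegative components and then proves, by contradiction with the level estimate, that this limit is nontrivial. This is lighter than the concentration-compactness step you describe and suffices for existence.
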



\begin{theorem}[Nonexistence]
\label{teo3}
Assume that $p+q=  2^{*}_{s}$ and one of the following facts hold.
\begin{enumerate}
	\item $\Omega$ is star-shaped with respect to the origin and $\mu_2 < 0$;
	\item $\Omega$ is star-shaped with respect to the origin and $A$ is the zero matrix;
	\item $b\geq 0$ and $\mu_1\geq \lambda_{1,s}-|a-c|$.
\end{enumerate}
Then \eqref{eq:1.1+}  does not admit any solution. Furthermore, if $\mu_2\leq 0$ and 
$$
p+q>  2^{*}_{s},
$$
then \eqref{eq:1.1+}  does not admit any bounded solution if $\Omega$ is star-shaped with respect to the origin.
\end{theorem}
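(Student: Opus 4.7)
The plan is to combine two complementary strategies. For cases (1), (2) and the supercritical addendum I would rely on a Pohozaev-type identity for the fractional system; for case (3) I would test against the first Dirichlet eigenfunction of $(-\Delta)^s$ and exploit Perron-Frobenius for the symmetric matrix $A$.

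The system Pohozaev identity comes from applying the Ros-Oton--Serra scalar identity to each equation of \eqref{eq:1.1+} and summing. With $\delta(x)=\dist(x,\partial\Omega)$ and $Q(u,v)=au^2+2buv+cv^2$, a routine computation yields
\begin{multline*}
s\int_\Omega Q(u,v)\,dx + \Bigl[\tfrac{2N}{p+q}-(N-2s)\Bigr]\int_\Omega u^p v^q\,dx \\
= \tfrac{\Gamma(1+s)^2}{2}\int_{\partial\Omega}(x\cdot\nu)\bigl[(u/\delta^s)^2+(v/\delta^s)^2\bigr]\,d\sigma.
\end{multline*}
When $\Omega$ is star-shaped the right-hand side is non-negative. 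In the critical case $p+q=2^{*}_{s}$ one has $\tfrac{2N}{p+q}=N-2s$, so the bracket vanishes and the left-hand side reduces to $s\int_\Omega Q$. Case (1) then follows because $\mu_2<0$ forces $Q\leq\mu_2(u^2+v^2)<0$ pointwise in $\Omega$. Case (2), where $A=0$, makes the whole left-hand side zero; strict star-shapedness then forces $u/\delta^s=v/\delta^s=0$ on $\partial\Omega$, and the Hopf-type principle for $(-\Delta)^s$ applied to the positive solutions contradicts $u,v>0$. For $p+q>2^{*}_{s}$ the bracket is strictly negative and $\mu_2\leq 0$ gives $\int_\Omega Q\leq 0$, so the left-hand side is strictly negative and contradicts the sign of the right-hand side; the boundedness hypothesis on $u,v$ is invoked precisely to legitimise the identity in this regime, via the scalar boundary regularity needed to make sense of $u/\delta^s$.

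For case (3), let $\phi_{1,s}>0$ be the $L^2$-normalized first eigenfunction of $(-\Delta)^s$ and set $U=\int_\Omega u\phi_{1,s}\,dx>0$, $V=\int_\Omega v\phi_{1,s}\,dx>0$. Testing the two equations of \eqref{eq:1.1+} against $\phi_{1,s}$ produces
\begin{equation*}
(\lambda_{1,s} I - A)\binom{U}{V}=\binom{\alpha}{\beta},\qquad \alpha,\beta>0.
\end{equation*}
When $b>0$, Perron-Frobenius furnishes a strictly positive eigenvector $(e_1,e_2)$ of $A$ associated with $\mu_2$; scalar-multiplying the displayed relation by $(e_1,e_2)$ yields $(\lambda_{1,s}-\mu_2)(e_1U+e_2V)=e_1\alpha+e_2\beta>0$, hence $\mu_2<\lambda_{1,s}$. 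The degenerate case $b=0$ is immediate since the linear coupling disappears and one obtains $\max\{a,c\}=\mu_2<\lambda_{1,s}$ from the two inequalities separately. On the other hand $\mu_2-\mu_1=\sqrt{(a-c)^2+4b^2}\geq|a-c|$, so the hypothesis $\mu_1\geq\lambda_{1,s}-|a-c|$ forces $\mu_2\geq\lambda_{1,s}$, contradicting the inequality just obtained.

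The main obstacle I anticipate is the rigorous derivation of the Pohozaev identity in this nonlocal vector setting: it depends on the continuous extension of $u/\delta^s$ and $v/\delta^s$ up to $\partial\Omega$, a non-trivial boundary regularity result. This is automatic for variational solutions in the critical case via Moser-type iteration, but must be imposed a priori in the supercritical regime, which is exactly what the boundedness hypothesis buys. A subordinate but necessary point is the strict positivity of $u/\delta^s$ on $\partial\Omega$ used in case (2), which follows from the fractional Hopf lemma applied to each component (each of which satisfies a scalar equation with non-negative right-hand side when $A=0$).
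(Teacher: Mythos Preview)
Your proposal is correct and follows essentially the same strategy as the paper: a Poh\v{o}zaev identity (obtained by applying the Ros-Oton--Serra scalar identity componentwise and summing) disposes of cases (1), (2) and the supercritical statement, while testing against the first eigenfunction $\varphi_1$ handles case (3).

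The only noteworthy difference is in how case (3) is processed after testing. The paper simply drops the positive terms $b v\varphi_1$, $b u\varphi_1$ and the nonlinear contributions from the two scalar relations to obtain $a<\lambda_{1,s}$ and $c<\lambda_{1,s}$ directly, and then observes that $\mu_1\leq \tfrac{(a+c)-|a-c|}{2}=\min\{a,c\}$, so the hypothesis $\mu_1\geq\lambda_{1,s}-|a-c|$ forces $\max\{a,c\}\geq\lambda_{1,s}$. Your route---pairing the vector relation $(\lambda_{1,s}I-A)(U,V)^t=(\alpha,\beta)^t$ with a positive eigenvector of $A$ for $\mu_2$---is a clean alternative that yields $\mu_2<\lambda_{1,s}$ in one stroke and then contradicts $\mu_2\geq\mu_1+|a-c|\geq\lambda_{1,s}$. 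Both are short; the paper's version is slightly more elementary in that it avoids any appeal to Perron--Frobenius, while yours makes the role of $\mu_2$ more transparent. One small point: in case (2) you invoke ``strict star-shapedness,'' which is not assumed; the paper instead uses that $\int_{\partial\Omega}(x,\nu)\,d\sigma=N|\Omega|>0$ together with the Hopf bound $u/\delta^s\geq\omega>0$ to reach the contradiction under mere star-shapedness.
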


\begin{theorem}[Regularity]
	\label{regul}
	Assume that $p+q\leq 2^*_s.$
	 If $(u,v)$ is a solution to \eqref{eq:1.1}, then $u,v\in C^{1,\alpha}_{{\rm loc}}(\Omega)$ for $s\in (0,1/2)$ and
	 $u,v\in C^{2,\alpha}_{{\rm loc}}(\Omega)$ for $s\in (1/2,1)$.
	 In particular $(u,v)$ solves \eqref{eq:1.1} in classical sense.
\end{theorem}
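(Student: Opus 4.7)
I would argue by a two-phase bootstrap: first upgrade the weak pair $(u,v)\in X(\Omega)\times X(\Omega)$ to $L^\infty_{\rm loc}(\Omega)$, and then iterate fractional Schauder estimates until the desired H\"older regularity is reached.

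\emph{Phase 1 ($L^\infty_{\rm loc}$ bound).} Starting from the Sobolev embedding $X(\Omega)\hookrightarrow L^{2^*_s}(\Omega)$, I would run a Moser iteration adapted to the nonlocal setting. For a cutoff $\eta\in C^\infty_c(\Omega)$ and $\beta>1$, use $\eta^2(|u|\wedge T)^{2(\beta-1)}u$ as a test function in the first equation and let $T\to\infty$. The standard algebraic inequality $(a-b)(a|a|^{2(\beta-1)}-b|b|^{2(\beta-1)})\geq c_\beta\, (a|a|^{\beta-1}-b|b|^{\beta-1})^2$ converts the bilinear form into a lower bound for the Gagliardo seminorm of $\eta|u|^\beta$. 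The critical coupling $|u|^{p-2}u\,|v|^q$ is controlled via H\"older's inequality with exponents $(p+q)/p$ and $(p+q)/q$ and the fractional Sobolev embedding (this is where the assumption $p+q\leq 2^*_s$ is used). A standard iteration on $\beta$ then yields $u,v\in L^\infty_{\rm loc}(\Omega)$.

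\emph{Phase 2 (H\"older bootstrap).} Once $u,v\in L^\infty_{\rm loc}(\Omega)$, both right-hand sides of \eqref{eq:1.1} lie in $L^\infty_{\rm loc}$. By Silvestre's regularity theorem, $(-\Delta)^s u\in L^\infty_{\rm loc}$ implies $u\in C^{\alpha}_{\rm loc}(\Omega)$ for every $\alpha<\min(2s,1)$, and similarly for $v$. Now I would iterate the fractional Schauder estimate: $(-\Delta)^s w=h$ with $h\in C^{\alpha}_{\rm loc}$ and $\alpha+2s\notin\N$ forces $w\in C^{\alpha+2s}_{\rm loc}$, interpreted as $C^{k,\gamma}_{\rm loc}$ with $k+\gamma=\alpha+2s$ when $\alpha+2s>1$. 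The power composition $t\mapsto|t|^{p-2}t$ (resp.\ $|t|^{q-2}t$) preserves local H\"older regularity of bounded functions, with exponent possibly reduced by a factor $\min(1,p-1)$ (resp.\ $\min(1,q-1)$) when $1<p<2$ (resp.\ $1<q<2$); this only slows the bootstrap, not stops it. Finitely many iterations suffice: when $s\in(0,1/2)$ we push the index past $1$ (choosing $\epsilon>0$ to avoid integer thresholds) and obtain $u,v\in C^{1,\alpha}_{\rm loc}(\Omega)$; when $s\in(1/2,1)$ one further step pushes past $2$, giving $u,v\in C^{2,\alpha}_{\rm loc}(\Omega)$. In the latter case $2s>1$, so the principal-value integral defining $(-\Delta)^s u(x)$ converges absolutely at every $x\in\Omega$ and $(u,v)$ satisfies \eqref{eq:1.1} classically.

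\emph{Main obstacle.} The delicate step is the Moser iteration at the critical growth $p+q=2^*_s$: unlike in the local case, the double integral defining the Gagliardo seminorm does not localize cleanly, so the tail contributions coming from the complement of $\mathrm{supp}\,\eta$ must be absorbed using the polynomial decay of the kernel $|x-y|^{-N-2s}$, and the constants appearing at each iteration step must be tracked carefully in $\beta$ in order to close the recursion. Once $L^\infty_{\rm loc}$ is secured, the remaining steps rest on the now-standard fractional Schauder theory and are essentially mechanical.
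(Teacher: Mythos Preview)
Your two-phase strategy (Moser iteration to $L^\infty$, then fractional elliptic regularity to H\"older) matches the paper's, but the execution of Phase~1 differs in an instructive way. The paper proves a \emph{global} $L^\infty(\Omega)$ bound (Lemma~\ref{linfty}) by testing directly with $(u|u|_k^{r-2},0)$ and $(0,v|v|_k^{r-2})$ without any cutoff: since $u,v\in X(\Omega)$ vanish on $\R^N\setminus\Omega$, these truncated powers already lie in $X(\Omega)$, and the Gagliardo form needs no localization. This completely sidesteps what you correctly flag as your ``main obstacle'' --- the nonlocal tail terms generated by cutoffs --- and yields the stronger global conclusion for free. In Phase~2 the paper simply invokes the interior and boundary regularity of Ros--Oton--Serra \cite{RS} (Theorem~1.2 and Corollary~1.6) once the right-hand sides are in $L^\infty(\Omega)$; your Silvestre--plus--Schauder bootstrap is the same mechanism spelled out by hand. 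In short, your plan is sound, but the Dirichlet condition lets you avoid the localization headache entirely --- use global test functions in the Moser step.
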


\noindent
The nonexistence result stated in (3) of Theorem~\ref{teo3} 
holds in any bounded domain.\ For $b=0$ it reads as 
$\mu_2\geq \lambda_{1,s},$ properly complementing the 
assertions of Theorem~\ref{teo2}.\
The above results provide a full extension of the classical results 
of Br\'ezis and Nirenberg \cite{bn} for the local case $s=1$.\
We point out that we adopt in the paper the integral definition of the fractional
laplacian in a bounded domain and we do not exploit any {\em localization
procedure} based upon the Caffarelli-Silvestre extension \cite{cafsilv}, as done e.g.\
in \cite{colorado}.\
See \cite{confronto} for a nice comparison between these two different notions
of fractional laplacian in bounded domains.
By choosing $p=q=2^*_s/2$, system \eqref{eq:1.1+} reduces to
 \begin{equation}\label{eq:1.1++}
 \left\{
 \begin{aligned}
 (-\Delta)^s u &=au+bv+u^{2s/(N-2s)} v^{N/(N-2s)}   && \text{in $\Omega$,}\\
 (-\Delta)^s v &=bu+cv+u^{N/(N-2s)} v^{2s/(N-2s)}   && \text{in $\Omega$,}\\
 u>0&,\,\, v>0   && \text{in $\Omega$,} \\
 u=&\, v=0 && \text{in $\R^N\setminus\Omega$,}
 \end{aligned}
 \right.
 \end{equation}
which, in the particular case of $a=c$, setting $u=v$, 
boils down to the scalar equation
 \begin{equation}
 \left\{
 \begin{aligned}
 (-\Delta)^s u &=\lambda u+u^{2^*_s-1}   && \text{in $\Omega$,}\\
  u>0&   && \text{in $\Omega$,} \\
  u=0 &&& \text{in $\R^N\setminus\Omega$,}
 \end{aligned}
 \right.
 \end{equation}
 which is the natural fractional counterpart
 for the classical Br\'ezis-Nirenberg problem \cite{bn}.
For existence results for this problem, we refer 
to \cite{servadeiTAMS,low} and to the references therein.



\section{Preliminary stuff}
\label{preliminar}

\subsection{Notations and setting}
We refer the reader to \cite{nezza} 
for further details about the functional framework that follows.\
For any measurable function $u:\R^N\to\R$ we define the Gagliardo seminorm by setting
\[
[u]_{s}:=\Big(\frac{C(N,s)}{2}\int_{\R^{2N}}\frac{|u(x)-u(y)|^2}{|x-y|^{N+2s}} dxdy\Big)^{1/2}=
\Big(\int_{\R^{N}}|(-\Delta)^{\frac{s}{2}} u|^2dx\Big)^{1/2}.
\]
The second equality follows by \cite[Proposition 3.6]{nezza} when 
the above integrals are finite. Then, we introduce the fractional Sobolev space
\[
H^s(\R^N)=\{u\in L^2(\R^N):\,\, [u]_{s}<\infty\},\quad
\|u\|_{H^s}=(\|u\|^2_{L^2}+[u]_s^2)^{1/2},
\]
which is a Hilbert space and we consider the closed subspace
\begin{equation}
\label{defX}
X(\Omega):=\{u\in H^s(\R^N):\,u=0\,\,\,\mbox{a.e. in $\R^N\setminus\Omega$}\}.
\end{equation}
Due to the fractional Sobolev inequality, $X(\Omega)$ is a Hilbert space with inner product
\begin{equation}
\label{innprod}
\langle u,v\rangle_X:=\frac{C(N,s)}{2}\int_{\R^{2N}}\frac{(u(x)-u(y))(v(x)-v(y))}{|x-y|^{N+2s}}\, dx\, dy,
\end{equation}
which induces the norm $\|\cdot\|_X=[\,\cdot\,]_{s}$.
Now, we consider the Hilbert space given by the product
\begin{equation}
\label{defY}
Y(\Omega):=X(\Omega)\times X(\Omega),
\end{equation}
equipped with the inner product
\begin{equation}
\label{innprodY}
\langle (u,v),(\varphi,\psi)\rangle_Y:=\langle u,\varphi\rangle_X+\langle v,\psi\rangle_X
\end{equation}
and the norm
\begin{equation}
\label{normY}
\|(u,v)\|_{Y}:=(\|u\|_{X}^2+\|v\|_{X}^2)^{1/2}.
\end{equation}
We shall consider $L^m(\Omega)\times L^m(\Omega)$ ($m>1$) 
equipped with the standard product norm
\begin{equation}
\label{normL}
\|(u,v)\|_{L^m\times L^m}:=(\|u\|_{L^m}^2+\|v\|_{L^m}^2)^{1/2}.
\end{equation}
We recall that we have
\begin{equation}
\label{controllo}
\mu_1 |U|^2\leq (AU,U)_{\R^2}\leq \mu_2 |U|^2,\quad\text{for all $U:=(u,v) \in\R^2$.}
\end{equation}
In this paper, we consider the following notation for product space $ \mathfrak{F} \times \mathfrak{F} := \mathfrak{F}^2$
and set
$$
w^+ (x):=\max \{w(x),0\},\quad\,\,  w^- (x):=\min\{w(x),0\},
$$ 
for positive and negative part of a function $w.$  Consequently we get $w=w^+ + w^- $.\
During chains of inequalities, universal constants will be denoted by the
same letter $C$ even if their numerical value may change from line to line.

\subsection{Weak solutions}
Consider the following system
\begin{equation}\label{eq:1.aux}
\left\{
\begin{aligned}
(-\Delta)^s u &= f(u,v)   && \text{in $\Omega$,}\\
(-\Delta)^s v &= g(u,v)   && \text{in $\Omega$,}\\
 u=&\, v=0 && \text{in $\R^N\setminus\Omega$.}
\end{aligned}
\right.
\end{equation}
where $f,g:\R\times\R\to\R$ are
Carath\'eodory mappings  which satisfies, respectively, the following growths conditions
\begin{align}\label{ff}
& |f(z,w)|\le C(1+|z|^{2^*_s-1}+|w|^{2^*_s-1}), \quad
\mbox{for all $(z,w)\in\mathbb{R}^2$}, \\
& \label{gg}
|g(z,w)|\le C(1+|z|^{2^*_s-1}+|w|^{2^*_s-1}), \quad
\mbox{for all $(z,w)\in\mathbb{R}^2$}.
\end{align}
\vskip2pt

\begin{definition}
	\label{defweak}
We say that $(u,v)\in Y(\Omega)$ is a weak solutions of (\ref{eq:1.aux}), if
\begin{equation}\label{weak}
\langle (u,v),(\varphi,\psi)\rangle_Y= \int_\Omega f(u,v)\varphi dx + \int_\Omega g(u,v)\psi dx,
\end{equation}
for all $(\varphi,\psi)\in Y(\Omega)$.
\end{definition}

\subsection{A priori bounds}\label{apb}
\noindent
We introduce some notation: for all $t\in\R$ and $k>0$, we set
\begin{equation}\label{trunk}
t_k:={\rm sgn}(t)\min\{|t|,k\}.
\end{equation}

\noindent
From \cite[Lemma 3.1]{IMS} we recall the following

\begin{lemma}\label{ineq}
For all $a,b\in\R$, $r\ge 2$, and $k>0$ we have
\[(a-b)(a|a|_k^{r-2}-b|b|_k^{r-2})\geq \frac{4(r-1)}{r^2}(a|a|_k^{\frac{r}{2}-1}-b|b|_k^{\frac{r}{2}-1})^2.\]
\end{lemma}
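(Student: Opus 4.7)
The plan is to reduce the claimed inequality to a pointwise inequality for derivatives via the fundamental theorem of calculus followed by Cauchy--Schwarz. Set
\[
\phi(t):=t|t|_k^{r-2},\qquad \psi(t):=t|t|_k^{r/2-1},
\]
so the inequality to establish reads $(a-b)(\phi(a)-\phi(b))\geq \frac{4(r-1)}{r^2}(\psi(a)-\psi(b))^2$. Both $\phi$ and $\psi$ are piecewise smooth, with a single corner at $|t|=k$, and globally Lipschitz on bounded intervals; in particular they are absolutely continuous. Hence, writing $\xi_\tau:=b+\tau(a-b)$, I would use
\[
\phi(a)-\phi(b) = (a-b)\int_0^1\phi'(\xi_\tau)\,d\tau,\qquad \psi(a)-\psi(b) = (a-b)\int_0^1\psi'(\xi_\tau)\,d\tau.
\]

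Multiplying the first identity by $(a-b)$, squaring the second, and applying Cauchy--Schwarz to the latter, I obtain
\[
(a-b)(\phi(a)-\phi(b)) = (a-b)^2\int_0^1\phi'(\xi_\tau)\,d\tau,\qquad (\psi(a)-\psi(b))^2 \leq (a-b)^2\int_0^1(\psi'(\xi_\tau))^2\,d\tau.
\]
It therefore suffices to prove the pointwise inequality
\[
\phi'(t) \geq \frac{4(r-1)}{r^2}\bigl(\psi'(t)\bigr)^2 \qquad\text{for a.e.\ }t\in\R.
\]

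To verify this, I would split into two regimes. For $|t|<k$, direct computation gives $\phi'(t)=(r-1)|t|^{r-2}$ and $\psi'(t)=\frac{r}{2}|t|^{r/2-1}$, hence $(\psi'(t))^2=\frac{r^2}{4}|t|^{r-2}$, and the inequality holds with \emph{equality}. For $|t|>k$, one has $\phi'(t)=k^{r-2}$ and $\psi'(t)=k^{r/2-1}$, so $(\psi'(t))^2=k^{r-2}=\phi'(t)$ and the desired estimate reduces to $r^2\geq 4(r-1)$, i.e.\ $(r-2)^2\geq 0$, which is valid precisely for $r\geq 2$. The only delicate point is the nonsmoothness of $\phi$ and $\psi$ at the threshold $|t|=k$, but absolute continuity makes the fundamental theorem of calculus along the segment from $b$ to $a$ rigorous, so I do not expect any further obstacle.
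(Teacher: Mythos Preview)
Your argument is correct: the reduction via the fundamental theorem of calculus and Cauchy--Schwarz to the pointwise inequality $\phi'(t)\geq \tfrac{4(r-1)}{r^2}(\psi'(t))^2$ is sound, and your case split at $|t|=k$ verifies it (with equality for $|t|<k$ and the estimate $(r-2)^2\geq 0$ for $|t|>k$). The paper does not prove this lemma at all---it simply recalls it from \cite[Lemma~3.1]{IMS}---so there is no in-paper argument to compare against; your self-contained proof is a welcome addition.
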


\noindent
In the following, we prove an $L^\infty$-bound on the weak solutions of \eqref{eq:1.aux}
which will be needed in order to get nonexistence and regularity results.

\begin{lemma}\label{linfty}
Assume that $f$ and $g$ satisfy \eqref{ff}-\eqref{gg} and let $(u,v)\in Y(\Omega)$ 
be a weak solution to \eqref{eq:1.aux}. Then we have $u,v\in L^\infty(\Omega)$.
\end{lemma}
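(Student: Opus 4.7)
The strategy is to implement a Moser-type iteration adapted to the coupled nonlocal system, closely following the scalar scheme of \cite{IMS} from which the pointwise algebraic inequality in Lemma~\ref{ineq} is taken. Given the weak solution $(u,v)\in Y(\Omega)$, for parameters $r\geq 2$ and $k>0$ I would test \eqref{weak} with the pair
\[
(\varphi,\psi)=\bigl(u\,|u|_k^{r-2},\,v\,|v|_k^{r-2}\bigr),
\]
whose components lie in $X(\Omega)\cap L^\infty(\Omega)$ and hence define an admissible element of $Y(\Omega)$. Applying Lemma~\ref{ineq} pointwise inside the double integrals in \eqref{innprod} bounds each of the two diagonal Gagliardo-type forms from below by $\tfrac{4(r-1)}{r^2}\,[w\,|w|_k^{r/2-1}]_s^2$ with $w=u,v$. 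Combined with the fractional Sobolev embedding $X(\Omega)\hookrightarrow L^{2^*_s}(\Omega)$, the left-hand side then dominates a constant multiple of
\[
\bigl\|u\,|u|_k^{\tfrac{r}{2}-1}\bigr\|_{L^{2^*_s}}^{2}+\bigl\|v\,|v|_k^{\tfrac{r}{2}-1}\bigr\|_{L^{2^*_s}}^{2}.
\]

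On the right-hand side, the growth conditions \eqref{ff}--\eqref{gg} together with Young's inequality (used to reduce mixed terms such as $|v|^{2^*_s-1}|u|\,|u|_k^{r-2}$ to diagonal ones) yield a bound by a constant times
\[
\int_\Omega \bigl(1+|u|^{2^*_s}+|v|^{2^*_s}\bigr)\bigl(|u|_k^{r-2}+|v|_k^{r-2}\bigr)\,dx.
\]
The technical heart of the argument is the absorption of the critical integrand. For a threshold $M>0$, I would split $|u|^{2^*_s}=|u|^{2^*_s-2}\cdot |u|^2$ into its contributions on $\{|u|\leq M\}$ and $\{|u|>M\}$. On the first set one picks up only a harmless prefactor $M^{2^*_s-2}$ multiplying the lower-order norm $\|u\,|u|_k^{r/2-1}\|_{L^2}^2$. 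On the second set, H\"older's inequality with dual exponents $N/(2s)$ and $N/(N-2s)$ gives
\[
\int_{\{|u|>M\}}|u|^{2^*_s-2}|u|^2|u|_k^{r-2}\,dx \leq \|u\|_{L^{2^*_s}(\{|u|>M\})}^{2^*_s-2}\,\bigl\|u\,|u|_k^{r/2-1}\bigr\|_{L^{2^*_s}}^{2},
\]
and the first factor can be made arbitrarily small by taking $M$ large, via absolute continuity of the Lebesgue integral, since $u\in L^{2^*_s}(\Omega)$. This allows the critical contribution to be absorbed into the left-hand side; an identical estimate disposes of the $|v|^{2^*_s}$ term.

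After absorption and letting $k\to\infty$ by monotone convergence, one obtains a recursive bound of the form
\[
\|u\|_{L^{r\,2^*_s/2}}^{r}+\|v\|_{L^{r\,2^*_s/2}}^{r}\leq C(r)\bigl(1+\|u\|_{L^r}^{r}+\|v\|_{L^r}^{r}\bigr),
\]
which propagates $L^r$-integrability to $L^{r\,2^*_s/2}$-integrability. Starting from $r_0=2^*_s$ (guaranteed by $(u,v)\in Y(\Omega)$) and iterating along $r_{n+1}=r_n\cdot 2^*_s/2\to\infty$, a standard Moser-type induction with careful tracking of the constants $C(r_n)$ produces a uniform bound as $n\to\infty$, yielding $u,v\in L^\infty(\Omega)$. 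I expect the main obstacle, as in the scalar case, to be the absorption step for the critical integrand: one must verify that the threshold $M$ is independent of $k$ and that the constants $C(r_n)$ compound slowly enough for the iteration to close. The sharp prefactor $(r-1)/r^2$ furnished by Lemma~\ref{ineq} is precisely what makes this possible.
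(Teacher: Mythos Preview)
Your overall strategy (truncated test functions from Lemma~\ref{ineq}, Sobolev embedding on the left, growth plus absorption of the critical term on the right, then Moser iteration) is exactly the paper's. Two implementation choices in your sketch, however, differ from the paper in ways that matter for closing the argument.

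\textbf{Cross terms after Young.} Your right-hand side $\int_\Omega(1+|u|^{2^*_s}+|v|^{2^*_s})(|u|_k^{r-2}+|v|_k^{r-2})\,dx$ still contains mixed terms such as $|v|^{2^*_s}|u|_k^{r-2}$. Your absorption paragraph treats only the diagonal pieces: splitting $|v|^{2^*_s}|u|_k^{r-2}$ on $\{|v|>M\}$ and applying H\"older produces a factor $\||v|\,|u|_k^{(r-2)/2}\|_{L^{2^*_s}}^2$, which is not dominated by $\|u|u|_k^{r/2-1}\|_{L^{2^*_s}}^2+\|v|v|_k^{r/2-1}\|_{L^{2^*_s}}^2$ and hence cannot be absorbed into the left side while $k$ is finite. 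The paper avoids this by applying Young with exponents $\tfrac{2^*_s+r-2}{2^*_s-1}$ and $\tfrac{2^*_s+r-2}{r-1}$ \emph{before} any splitting, so that the right-hand side reduces to the purely diagonal quantity $\int_\Omega(|u|^{r-1}+|v|^{r-1}+|u|^{2^*_s+r-2}+|v|^{2^*_s+r-2})\,dx$.

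\textbf{Absorption once versus at every step.} You propose to absorb at each $r$ and iterate $r_{n+1}=\gamma^2 r_n$. But the absorption condition is $Cr\cdot\|u\|_{L^{2^*_s}(\{|u|>M\})}^{2^*_s-2}<\tfrac12$, which forces $M=M(r)\to\infty$ as $r\to\infty$; your constant $C(r)$ then carries $M(r)^{2^*_s-2}$, and showing $\prod_n C(r_n)^{1/r_n}<\infty$ requires control on the tail decay of $|u|^{2^*_s}$ that you do not yet have. The paper instead absorbs only once, at $r=2^*_s+1$, to kick-start integrability above $2^*_s$; thereafter it iterates the \emph{un}-absorbed inequality along $r_{n+1}=\gamma^2 r_n-2^*_s+2$ (so that $2^*_s+r_{n+1}-2=\gamma^2 r_n$), with explicit constants $Cr$ whose product is controlled by elementary estimates on $\sum n\gamma^{-2n}$.

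In short, your plan is sound, but if you push it through you will find yourself either eliminating the cross terms via the stronger Young step and landing on the paper's $L^{2^*_s+r-2}\to L^{\gamma^2 r}$ recursion, or facing a delicate bookkeeping of $M(r)$ that the paper's single-absorption scheme circumvents.
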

\begin{proof}
For all $r\ge 2$ and $k>0$, the map $t\mapsto t|t|_k^{r-2}$ is Lipschitz in $\R$. Then 
 $$
 (u|u|_k^{r-2},0)\in Y(\Omega),\,\,\, (0,v|v|_k^{r-2})\in Y(\Omega). 
 $$ 
We  test equation \eqref{weak} with $(u|u|_k^{r-2},0)$,
we apply the fractional Sobolev inequality, Young's inequality, Lemma \ref{ineq}, 
and use \eqref{ff} to end up with
\begin{equation}\label{sc1}
\begin{split}
\|u|u|_k^{\frac{r}{2}-1}\|_{L^{2^*_s}}^2 &\le C\|u|u|_k^{\frac{r}{2}-1}\|_X^2\le \frac{Cr^2}{r-1}\langle u,u|u|_k^{r-2}\rangle_X \\
&\le Cr\int_\Omega |f(u,v)||u||u|_k^{r-2}\, dx \\
&\le Cr\int_\Omega\big(|u||u|_k^{r-2}+|u|^{2^*_s}|u|_k^{r-2}+|v|^{2^*_s-1}|u||u|_k^{r-2}\big)\, dx\\
&\le Cr\int_\Omega\big(|u|^{r-1}+|u|^{2^*_s+r-2}+|v|^{2^*_s+r-2}\big)\, dx,
\end{split}
\end{equation}
for some $C>0$ independent of $r\geq 2$ and $k>0$. Then, Fatou Lemma, as $k\to\infty$ yields
\begin{equation}\label{sc2}
\|u\|_{L^{\gamma^2 r}}^r\le Cr\Big(\int_\Omega\big(|u|^{r-1}+|u|^{2^*_s+r-2}+|v|^{2^*_s+r-2}\big)\, dx\Big)
\end{equation}
where  $\gamma=(2^*_s/2)^{1/2}$ (the right hand side may at this stage be $\infty$).
Now, in a similar way,  test \eqref{weak} with $(0,v|v|_k^{r-2})$ to obtain
for some $C>0$ independent of $r\geq 2$ 
\begin{equation}\label{sc2v}
\|v\|_{L^{\gamma^2 r}}^r\le Cr\Big(\int_\Omega\big(|v|^{r-1}+|u|^{2^*_s+r-2}+|v|^{2^*_s+r-2}\big)\, dx\Big),
\end{equation}
(the right hand side may be $\infty$).
By \eqref{sc2} and \eqref{sc2v} we get
\begin{equation}\label{sct}
\|u\|_{L^{\gamma^2 r}}^r+\|v\|_{L^{\gamma^2 r}}^r\le Cr\Big(\int_\Omega\big(|u|^{r-1}+|v|^{r-1}+|u|^{2^*_s+r-2}+|v|^{2^*_s+r-2}\big)\, dx\Big).
\end{equation}
Our aim is to develop a suitable bootstrap argument to prove that $u,v\in L^p(\Omega)$ for all $p\ge 1$.
We start from \eqref{sct}, with $r=2^*_s+1>2$, and fix $\sigma>0$ such that $Cr\sigma<\frac{1}{2}$.
Then there exists $K_0>0$ (depending on $u$ and $v$) such that
\begin{equation}
\label{piccolo}
\left(\int_{\{|u|> K_0\}}|u|^{2^*_s}\, dx\right)^{1-\frac{2}{2^*_s}}+\left(\int_{\{|v|> K_0\}}|v|^{2^*_s}\, dx\right)^{1-\frac{2}{2^*_s}}\leq\sigma.
\end{equation}
By H\"older inequality and \eqref{piccolo} we have
\begin{equation}\label{r1}\begin{array}{ll}
\displaystyle\int_\Omega |u|^{2^*_s+r-2} \, dx&\le \displaystyle K_0^{2^*_s+r-2}|\{|u|\le K_0\}|+\int_{\{|u|>K_0\}}|u|^{2^*_s+r-2}\, dx \\
&\displaystyle\le K_0^{2^*_s+r-2}|\Omega|+\Big(\int_\Omega (u^r)^{\frac{2^*_s}{2}}\, dx\Big)^{\frac {2}{2^*_s}}\Big(\int_{\{|u|>K_0\}}|u|^{2^*_s} \, dx\Big)^{1-\frac{2}{2^*_s}} \\
&\displaystyle\le K_0^{2^*_s+r-2}|\Omega|+\sigma\|u\|_{L^{\gamma^2 r}}^r \end{array}
\end{equation}
and
\begin{align}\label{r2}
\displaystyle\int_\Omega |v|^{2^*_s+r-2} \, dx\le\displaystyle  K_0^{2^*_s+r-2}|\Omega|+\sigma\|v\|_{L^{\gamma^2r}}^r.
\end{align}
\noindent
By \eqref{sct}, \eqref{r1} and \eqref{r2}, we have
\begin{equation}\label{rt}
\frac{1}{2}\left(\|u\|_{L^{\gamma^2 r}}^r+\|v\|_{L^{\gamma^2 r}}^r\right)\le Cr\Big(\int_\Omega\big(|u|^{r-1}+|v|^{r-1}\big)\, dx+ K_0^{2^*_s+r-2}\Big).
\end{equation}
Since $r=2^*_s+1$, we get $u,v\in L^{\frac{2^*_s(2^*_s+1)}{2}}(\Omega)$.
We define a sequence $\{r_n\}$ with
\begin{equation*}
r_0=2_s^*+1,\quad r_{n+1}=\gamma^2r_n-2_s^*+2.
\end{equation*}
Since 
$$
2_s^*+r_0-2<\displaystyle\frac{2^*_s(2^*_s+1)}{2},
$$ 
we get
$$
\|u\|_{L^{2_s^*+r_0-2}}+\|v\|_{L^{2_s^*+r_0-2}}<+\infty.
$$ 
Hence, we aim to
begin an iteration in order to get the $L^\infty$-bounds of $u$ and $v$.
Using formula \eqref{sct} and H\"{o}lder inequality, we obtain
\begin{align*}
& \|u\|_{L^{\gamma^2r}}+\|v\|_{L^{\gamma^2r}}  \\
&\leq (Cr)^{\frac{1}{r}}\Big(
|\Omega|^{\frac{2_s^*-1}{2_s^*+r-2}}\big(\|u\|_{L^{2_s^*+r-2}}^{r-1}+\|v\|_{L^{2_s^*+r-2}}^{r-1}\big)+
\big(\|u\|_{L^{2_s^*+r-2}}^{2_s^*+r-2}+\|v\|_{L^{2_s^*+r-2}}^{2_s^*+r-2}\big)\Big)^\frac{1}{r}  \\
&\leq (Cr)^{\frac{1}{r}}\Big(
\big(1+|\Omega|^{\frac{2_s^*-1}{2_s^*}}\big)\big(\|u\|_{L^{2_s^*+r-2}}+\|v\|_{L^{2_s^*+r-2}}\big)^{r-1}+
\big(\|u\|_{L^{2_s^*+r-2}}+\|v\|_{L^{2_s^*+r-2}}\big)^{2_s^*+r-2}\Big)^\frac{1}{r}.
\end{align*}
Substituting $r_{n+1}$ for $r$, since $\gamma^2r_n=2_s^*+r_{n+1}-2$, we get
\begin{align}\label{sct2}
&\|u\|_{L^{\gamma^2r_{n+1}}}+\|v\|_{L^{\gamma^2r_{n+1}}}\\
& \leq (Cr_{n+1})^{\frac{1}{r_{n+1}}}\Big( \notag
C\big(\|u\|_{L^{\gamma^2r_n}}+\|v\|_{L^{\gamma^2r_n}}\big)^{r_{n+1}-1}+
\big(\|u\|_{L^{\gamma^2r_n}}+\|v\|_{L^{\gamma^2r_n}}\big)^{\gamma^2r_n}\Big)^\frac{1}{r_{n+1}}
\end{align}
Denote
$$T_n:=\max\{1,\|u\|_{L^{\gamma^2r_n}}+\|v\|_{L^{\gamma^2r_n}}\}.$$
Then \eqref{sct2} can be written as
\begin{equation}
\label{indu}
T_{n+1}\leq (1+C)^{\frac{1}{r_{n+1}}}{r_{n+1}}^{\frac{1}{r_{n+1}}}{T_n}^{\frac{\gamma^2r_n}{r_{n+1}}}.
\end{equation}
Since $r_{n+1}=\gamma^2r_n-2_s^*+2$, by induction it is possible to prove that
\begin{equation*}
	\frac{r_{n+1}}{\gamma^{2n+2}}=2_s^*-1+2\gamma^{-2n-2},\quad n\in\N.
\end{equation*}
If $n=0$ the assertion follows by a direct calculation. Assume now that the assertion 
holds for a given $n\geq 1$ and let's prove it for $n+1$. We get 
\begin{align*}
\frac{r_{n+2}}{\gamma^{2n+4}}
&=\frac{r_{n+1}}{\gamma^{2n+2}}-\frac{2_s^*-2}{\gamma^{2n+4}} \\
&=2_s^*-1+2\gamma^{-2n-2}-\frac{2_s^*-2}{\gamma^{2n+4}} \\
&=2_s^*-1+2\gamma^{-2n-4},
\end{align*}
which proves the claim. In particular, $\frac{r_{n+1}}{\gamma^{2n+2}}\approx 2^*_s-1$.
From \eqref{indu}, we also have 
\begin{align*}
T_{n+1}&\leq (1+C)^{\frac{1}{r_{n+1}}}{r_{n+1}}^{\frac{1}{r_{n+1}}}{T_n}^{\frac{\gamma^2r_n}{r_{n+1}}}\\
&\leq (1+C)^{\frac{1}{r_{n+1}}}{r_{n+1}}^{\frac{1}{r_{n+1}}}
{\Big((1+C)^{\frac{1}{r_n}}{r_n}^{\frac{1}{r_n}}{T_{n-1}}^{\frac{\gamma^2r_{n-1}}{r_n}}\Big)}^{\frac{\gamma^2r_n}{r_{n+1}}}\\
&=(1+C)^\frac{1+\gamma^2}{r_{n+1}}{r_{n+1}}^{\frac{1}{r_{n+1}}}{r_n}^{\frac{\gamma^2}{r_{n+1}}}
{T_{n-1}}^{\frac{\gamma^4r_{n-1}}{r_{n+1}}}\leq \cdots\\ 
&\leq(1+C)^\frac{1+\gamma^2+\gamma^4+\cdots+\gamma^{2n}}{r_{n+1}}
\Big({r_{n+1}}^{\frac{1}{r_{n+1}}}{r_n}^{\frac{\gamma^2}{r_{n+1}}}{r_{n-1}}^{\frac{\gamma^4}{r_{n+1}}}\cdots
{r_1}^{\frac{\gamma^{2n}}{r_{n+1}}}\Big){T_0}^\frac{\gamma^{2n+2}r_0}{r_{n+1}}\\
&=(1+C)^\frac{\gamma^{2n+2}-1}{(\gamma^2-1)r_{n+1}}
\Big(\prod\limits_{i=0}^{n}r_{i+1}^{\gamma^{2(n-i)}}\Big)^{\frac{1}{r_{n+1}}}{T_0}^\frac{\gamma^{2n+2}r_0}{r_{n+1}}.
\end{align*}
We can easily compute that
\begin{equation*}
\frac{\gamma^{2n+2}-1}{(\gamma^2-1)r_{n+1}}\approx \frac{2}{(2_s^*-1)(2^*_s-2)},
\qquad\,\,
\frac{\gamma^{2n+2}r_0}{r_{n+1}}\approx \frac{2_s^*+1}{2^*_s-1}.
\end{equation*}
Moreover, $r_{i+1}<r_0\gamma^{2i+2}$ for every $i\in\N$, since
$$\frac{r_{i+1}}{\gamma^{2i+2}}=\frac{r_i}{\gamma^{2i}}-\frac{2_s^*-2}{\gamma^{2i+2}}<\frac{r_i}{\gamma^{2i}}
<\cdots<r_0,
$$
and $r_{n+1}>\gamma^{2n+2}$ eventually for $n$ large since
 $\frac{r_{n+1}}{\gamma^{2n+2}}\approx 2^*_s-1>1$, so that
\begin{equation*}
\Big(\prod\limits_{i=0}^{n}r_{i+1}^{\gamma^{2(n-i)}}\Big)^{\frac{1}{r_{n+1}}}
<\Big(\prod\limits_{i=0}^{n}(r_0\gamma^{2i+2})^{\gamma^{2(n-i)}}\Big)^{\frac{1}{\gamma^{2n+2}}}
\leq {r_0}^{\sum\limits_{i=0}^{\infty}\gamma^{-2i-2}}\gamma^{\sum\limits_{i=0}^{\infty}\frac{2i+2}{\gamma^{2i+2}}}<+\infty.
\end{equation*}
Hence $(T_n)$ remains uniformly bounded and the assertion follows.\ Notice that
the $L^\infty$-bound depends on $T_0$ which depends on $u$ (and not only on
$\|u\|_{2^*_s}$) through the presence of $K_0>0$ in estimate \eqref{rt}.
\end{proof}


\section{Poh\v ozaev identity and nonexistence}

\noindent
The purpose of this section is to prove Theorem \ref{teo3}, for this we need the following auxiliary result known as Poh\v ozaev identity for systems
involving the Laplacian fractional operator.

\begin{lemma}
\label{Pozlem}
Let $\Omega$ be a bounded $C^{1,1}$ domain and let
$F\in C^1(\R^+\times \R^+)$ be such that $F_u$ and $F_v$
satisfy the growth conditions \eqref{ff} and \eqref{gg}.
Let $(u,v)\in Y(\Omega)$ be a solution to system
\begin{equation}\label{eq:2.aux}
\left\{
\begin{aligned}
(-\Delta)^s u &= F_{u}(u,v)   && \text{in $\Omega$,}\\
(-\Delta)^s v &= F_v(u,v)   && \text{in $\Omega$,}\\
 u=&\, v=0 && \text{in $\R^N\setminus\Omega$}.
\end{aligned}
\right.
\end{equation}
Then $u,v\in C^s(\R^N)$, $u,v\in C^{1,\alpha}_{{\rm loc}}(\Omega)$ for $s\in (0,1/2)$,
$u,v\in C^{2,\alpha}_{{\rm loc}}(\Omega)$ for $s\in (1/2,1)$ and
\begin{equation}
\label{reg-conc}
\frac{u}{\delta^{s}}\big|_{\Omega},\; \frac{v}{\delta^{s}}\big|_{\Omega}\in C^{\alpha}(\overline{\Omega}) \;\;\mbox{ for some }\alpha\in(0,1),
\end{equation}
where $\delta(x):={\rm dist}(x,\partial\Omega)$, meaning that $u/\delta^{s}|_{\Omega}$ and $ v/\delta^{s}|_{\Omega}$ admit a continuous extension to $\overline{\Omega}$ which is $C^{\alpha}(\overline{\Omega})$. Moreover, the
following identity holds
\begin{align}
\label{idddd}
& \int_{\mathbb{R}^N}\left(|(-\Delta)^{\frac{s}{2}}u|^2+|(-\Delta)^{\frac{s}{2}}v|^2\right)dx-2^*_s\int_{\Omega}F(u,v)dx \\
& \qquad +\frac{\Gamma(1+s)^2}{N-2s}\int_{\partial\Omega}
\Big[\left(\frac{u}{\delta^s}\right)^2+\left(\frac{v}{\delta^s}\right)^2\Big](x,\nu)_{\mathbb{R}^N}d\sigma=0, \notag
\end{align}
where $\Gamma$ is the Gamma function.
\end{lemma}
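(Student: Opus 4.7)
The argument splits into two independent parts: establishing the regularity statements needed to even formulate the boundary term in \eqref{idddd}, and then deriving the identity itself by reducing to the scalar Pohozaev formula of Ros-Oton--Serra applied component-wise.

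For the regularity, the starting point is Lemma~\ref{linfty}: under the growth conditions \eqref{ff}--\eqref{gg} one has $u,v\in L^\infty(\Omega)$, hence $F_u(u,v)$ and $F_v(u,v)$ both belong to $L^\infty(\Omega)$. Then I would invoke the interior and boundary Schauder theory for the fractional Laplacian (Silvestre's interior estimates, together with the boundary regularity of Ros-Oton--Serra for solutions vanishing on $\R^N\setminus\Omega$) applied separately to each equation of \eqref{eq:2.aux}. This yields $u,v\in C^s(\R^N)$, the interior $C^{1,\alpha}_{\rm loc}$ (resp.\ $C^{2,\alpha}_{\rm loc}$) regularity in the two ranges of $s$, and, crucially, the boundary regularity \eqref{reg-conc}, namely that $u/\delta^s$ and $v/\delta^s$ extend continuously to $\overline{\Omega}$ as $C^\alpha$ functions. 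No new idea is needed here beyond quoting these results for each scalar equation with bounded right-hand side.

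For the identity, the idea is to apply the scalar Pohozaev identity of Ros-Oton--Serra to each component. For a function $w\in X(\Omega)$ that is regular enough, with $(-\Delta)^s w=h$ in $\Omega$, that identity reads
\begin{equation*}
\int_\Omega (x\cdot\nabla w)\, h\, dx=\frac{2s-N}{2}\int_{\R^N}|(-\Delta)^{s/2}w|^2dx-\frac{\Gamma(1+s)^2}{2}\int_{\partial\Omega}\Bigl(\frac{w}{\delta^s}\Bigr)^{2}(x,\nu)_{\R^N}d\sigma.
\end{equation*}
I would apply this with $w=u,\,h=F_u(u,v)$ and with $w=v,\,h=F_v(u,v)$, and sum the two identities. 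The key algebraic observation is the chain rule
\begin{equation*}
(x\cdot\nabla u)F_u(u,v)+(x\cdot\nabla v)F_v(u,v)=x\cdot\nabla\bigl(F(u,v)\bigr),
\end{equation*}
valid pointwise thanks to the $C^1$ regularity of $u,v$ inside $\Omega$ supplied by the first step. Integrating by parts on $\Omega$ produces $-N\int_\Omega F(u,v)\,dx$ plus a boundary contribution $\int_{\partial\Omega}F(u,v)(x,\nu)\,d\sigma$; the latter vanishes because $u=v=0$ on $\partial\Omega$ in the classical pointwise sense (and $F$ is assumed defined so that $F(0,0)=0$, or equivalently one replaces $F$ by $F-F(0,0)$ without affecting the gradient). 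On the other side of the identity, testing the weak formulation \eqref{weak} of \eqref{eq:2.aux} with $(u,v)$ itself gives
\begin{equation*}
\int_{\R^N}\bigl(|(-\Delta)^{s/2}u|^2+|(-\Delta)^{s/2}v|^2\bigr)dx=\int_\Omega\bigl(u\,F_u(u,v)+v\,F_v(u,v)\bigr)dx,
\end{equation*}
which is exactly what is needed to absorb the $\int u\,F_u+v\,F_v$ terms coming from the sum of the two scalar Pohozaev identities. Collecting everything and dividing by $(N-2s)/2$ (using $2^*_s=2N/(N-2s)$) yields \eqref{idddd}.

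\textbf{Main obstacle.} The delicate point is the justification of the chain-rule integration by parts in the fractional setting: while the identity is formal, one must know that $u,v$ are $C^1$ inside $\Omega$ and that the boundary trace of $F(u,v)(x,\nu)$ genuinely vanishes. Both facts follow from the regularity conclusions of the first step (interior $C^{1,\alpha}$ plus continuity of $u,v$ up to $\partial\Omega$ with boundary value zero), which is why the regularity theory has to be settled \emph{before} touching the identity. A second technical detail is the domain of validity of the scalar Ros-Oton--Serra Pohozaev formula: one needs $F_u(u,v),F_v(u,v)\in L^\infty(\Omega)$ and $w/\delta^s\in C^\alpha(\overline\Omega)$, both of which are delivered by Lemma~\ref{linfty} combined with the boundary regularity quoted above, so that the boundary integrals in \eqref{idddd} are well defined and finite.
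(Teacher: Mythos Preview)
Your proposal is correct and follows essentially the same route as the paper: both arguments first use Lemma~\ref{linfty} to get $L^\infty$ bounds, then invoke the Ros--Oton--Serra regularity theory to obtain the stated smoothness and the $C^\alpha$ boundary behavior of $u/\delta^s,\,v/\delta^s$, and finally apply the scalar Ros--Oton--Serra Poho\v{z}aev identity to each component, sum, and use the chain rule $x\cdot\nabla F(u,v)=F_u(u,v)(x\cdot\nabla u)+F_v(u,v)(x\cdot\nabla v)$ together with an integration by parts. The only minor difference is that your step of testing \eqref{weak} with $(u,v)$ is superfluous: once you write the scalar identity with $\int_{\R^N}|(-\Delta)^{s/2}w|^2$ on the right, there are no $\int uF_u+vF_v$ terms left to absorb, and the final identity follows directly after dividing by $(N-2s)/2$.
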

\begin{proof}
In light of Lemma~\ref{linfty}, we learn that $u,v\in L^\infty(\Omega)$. Then, $F_u(u,v)$ and
$F_v(u,v)$ belong to $L^\infty(\Omega)$ too. In turn, by \cite[Theorem 1.2 and Corollary 1.6]{RS}, we have that $u$ and $v$ satisfy the regularity conclusions stated in \eqref{reg-conc}. 
In particular, the system is satisfied in classical sense.
Whence, we are allowed to apply \cite[Proposition 1.6]{RS1} to both 
components $u$ and $v$, obtaining
\begin{align*}
& \int_{\Omega} (x\cdot\nabla u) (-\Delta)^s u\, dx=\frac{2s-N}{2}\int_{\Omega} u(-\Delta)^su \,dx-\frac{\Gamma(1+s)^2}{2}\int_{\partial\Omega} \Big(\frac{u}{\delta^s}\Big)^2 (x,\nu)_{\mathbb{R}^N}d\sigma,  \\
& \int_{\Omega} (x\cdot\nabla v) (-\Delta)^s v\, dx=\frac{2s-N}{2}\int_{\Omega} v(-\Delta)^sv \,dx-\frac{\Gamma(1+s)^2}{2}\int_{\partial\Omega} \Big(\frac{v}{\delta^s}\Big)^2 (x,\nu)_{\mathbb{R}^N}d\sigma .
\end{align*}
Then, since  $(-\Delta)^s u = F_u(u,v)$ and $(-\Delta)^s v = F_v(u,v)$ weakly
in $\Omega$ and recalling that
$$
\int_{\Omega} u(-\Delta)^su \,dx=\int_{\R^N} |(-\Delta)^{\frac{s}{2}} u|^2 \,dx,
\quad
\int_{\Omega} v(-\Delta)^sv \,dx=\int_{\R^N} |(-\Delta)^{\frac{s}{2}} v|^2 \,dx,
$$
we get
\begin{align*}
& \int_{\Omega} (x\cdot\nabla u) F_u(u,v)\, dx=\frac{2s-N}{2}\int_{\R^N} |(-\Delta)^{s/2} u|^2 \,dx-\frac{\Gamma(1+s)^2}{2}\int_{\partial\Omega} \Big(\frac{u}{\delta^s}\Big)^2 (x,\nu)_{\mathbb{R}^N}d\sigma,  \\
& \int_{\Omega} (x\cdot\nabla v) F_v(u,v)\, dx=\frac{2s-N}{2}\int_{\R^N} |(-\Delta)^{s/2}v|^2 \,dx-\frac{\Gamma(1+s)^2}{2}\int_{\partial\Omega} \Big(\frac{v}{\delta^s}\Big)^2 (x,\nu)_{\mathbb{R}^N}d\sigma .
\end{align*}
Observing that
$
\nabla F(u,v)\cdot x=F_u(u,v)\nabla u\cdot x+F_v(u,v)\nabla v\cdot x,
$
integrating by parts we get,
\begin{align*}
& (2s-N)
\int_{\mathbb{R}^N}\left(|(-\Delta)^{\frac{s}{2}}u|^2+|(-\Delta)^{\frac{s}{2}}v|^2\right)dx+
2N\int_{\Omega}F(u,v)dx \\
& \qquad =\Gamma(1+s)^2\int_{\partial\Omega}
\Big[\left(\frac{u}{\delta^s}\right)^2+\left(\frac{v}{\delta^s}\right)^2\Big](x,\nu)_{\mathbb{R}^N}d\sigma, \notag
\end{align*}
which concludes the proof.
\end{proof}

 \subsection{Proof of nonexistence}
Consider first the case $p+q=2^*_s$ with assumption (1) and assume 
by contradiction that \eqref{eq:1.1+} admits a
positive solution $(u,v)\in Y(\Omega)$. Consider the
functions $f,g:\R^+\times\R^+\to \R$ defined by
$$
f(z,w)=az+bw+\frac{2p}{p+q}z^{p-1}w^q,\quad\,\,
g(z,w)=bz+cw+\frac{2q}{p+q}z^pw^{q-1}.
$$
Then, setting
$$
F(z,w)=\frac{a}{2}z^2+bzw+\frac{c}{2}w^2+\frac{2}{p+q}z^{p}w^q = \frac{1}{2} (A(z,w),(z,w))_{\R^2}+\frac{2}{p+q}z^{p}w^q,
$$
we obtain that $F\in C^1(\mathbb{R}^+\times \mathbb{R}^+)$, $F_z=f$ and $F_w=g$
satisfy the growth conditions \eqref{ff} and \eqref{gg} and $(u,v)$ is a weak solution
to \eqref{eq:1.aux}. Then, the components $u,v$ enjoy the regularity \eqref{reg-conc}
stated in Lemma~\ref{Pozlem} and identity \eqref{idddd} holds.
Testing \eqref{weak} with $(\varphi,\psi)=(u,v)$, yields
\begin{align*}
 \int_{\mathbb{R}^N}\left(|(-\Delta)^{\frac{s}{2}}u|^2+|(-\Delta)^{\frac{s}{2}}v|^2\right)dx
&=\int_{\Omega}f(u,v)u \,dx+ \int_{\Omega}g(u,v)u\, dx\\
& =\int_{\Omega} (AU,U)_{\R^2}dx+ 2\int_{\Omega}u^pv^q dx,
\end{align*}
which substituted in \eqref{idddd}, yields, recalling that $p+q=2^*_s$,
\begin{equation}
\label{equalcase}
 \Big(1-\frac{2^*_s}{2}\Big)\int_{\Omega} (AU,U)_{\R^2}dx
 +\frac{\Gamma(1+s)^2}{N-2s}\int_{\partial\Omega}
\Big[\left(\frac{u}{\delta^s}\right)^2+\left(\frac{v}{\delta^s}\right)^2\Big](x,\nu)_{\mathbb{R}^N}d\sigma=0. \notag
\end{equation}
Since $\Omega$ is star-shaped w.r.t.\ the origin, the equation above yields $\int_{\Omega} (AU,U)_{\R^2}dx\geq 0.$ This is a contradiction
with \eqref{controllo}, because $\mu_2 < 0$ and $u,v>0.$
Now we cover case (2). If $A$ is the zero matrix, we get
$$
\int_{\partial\Omega}
\Big[\left(\frac{u}{\delta^s}\right)^2+\left(\frac{v}{\delta^s}\right)^2\Big](x,\nu)_{\mathbb{R}^N}d\sigma = 0,
$$
which contradicts the fractional version of Hopf lemma, see \cite[Lemma 2.7]{IMS}, since
$(-\Delta)^s u\geq 0$ and $(-\Delta)^s v\geq 0$ weakly yield $\frac{u}{\delta^s}\geq \omega$
and $\frac{v}{\delta^s}\geq \omega'$, for some positive constants $\omega,\omega'$.
Let us turn to case (3). If $\varphi_1>0$ is the first eigenfunction corresponding to $\lambda_{1,s}$ and we assume that a solution of \eqref{eq:1.1+} exists, by choosing $(\varphi_1,0)$
and $(0,\varphi_1)$ respectively in \eqref{weak}, we get
\begin{align*}
& \lambda_{1,s}\int_{\Omega} u\varphi_1 dx=
\int_{\R^N}(-\Delta)^{\frac{s}{2}}u  (-\Delta)^{\frac{s}{2}} \varphi_1dx=
\int_{\Omega} \big(au\varphi_1+bv\varphi_1+\frac{2p}{p+q}u^{p-1} v^q\varphi_1\big)dx, \\
& \lambda_{1,s}\int_{\Omega} v\varphi_1 dx=
\int_{\R^N}(-\Delta)^{\frac{s}{2}}v  (-\Delta)^{\frac{s}{2}} \varphi_1dx=
\int_{\Omega} \big(bu\varphi_1+cv\varphi_1+\frac{2q}{p+q}u^{p} v^{q-1}\varphi_1\big)dx.
\end{align*}
Then, since $b\geq 0$ and $u,v>0$, we get
$$
\lambda_{1,s}\int_{\Omega} u\varphi_1 dx>a\int_{\Omega} u\varphi_1 dx,\qquad 
\lambda_{1,s}\int_{\Omega} v\varphi_1 dx>c\int_{\Omega} v\varphi_1 dx,
$$
that is $\max\{a,c\}<\lambda_{1,s}$. On the other hand, by assumption and a direct calculation
$$
\lambda_{1,s}-|a-c|\leq \mu_1=\frac{(a+c)-\sqrt{(a-c)^2+4b^2}}{2}\leq \frac{(a+c)-|a-c|}{2}=\min\{a,c\},
$$
which yields $\max\{a,c\}\geq \lambda_{1,s}$, namely a contradiction. 
Finally we prove the last assertion. In the case $p+q>2^*_s$,  any bounded solution
of system \eqref{eq:1.1+} is smooth according to Lemma~\ref{Pozlem} and 
arguing as above yields the identity
\begin{align*}
\Big(1-\frac{2^*_s}{2}\Big)\int_{\Omega} (AU,U)_{\R^2}dx
&+2\Big(1-\frac{2^*_s}{p+q}\Big) \int_{\Omega} u^p v^q dx \\
&+\frac{\Gamma(1+s)^2}{N-2s}\int_{\partial\Omega}
\Big[\left(\frac{u}{\delta^s}\right)^2+\left(\frac{v}{\delta^s}\right)^2\Big](x,\nu)_{\mathbb{R}^N}d\sigma=0.
\end{align*}
This yields $\int_{\Omega} (AU,U)_{\R^2}dx>0$, contradicting $\mu_2\leq 0$ via
\eqref{controllo}. This concludes the proof.
\qed
\medskip

\noindent
{\bf Proof of Theorem \ref{regul}.}
The assertion follows as a particular case of Lemma~\ref{Pozlem}.
\qed
\medskip

\section{Existence I, subcritical case}

\noindent
In this section, we will prove the Theorem \ref{teo1} which guarantees the existence of 
solutions for the problem \ref{eq:1.1+} involving subcritical non-linearity.

 \subsection{Proof of existence I}

Let $\Omega $ be a bounded domain and suppose that
\begin{equation}\label{d1}
b \geq 0,
\end{equation}
\begin{equation}\label{d2}
\mu_2 < \lambda_{1,s},
\end{equation}
\begin{equation}\label{d3}
p + q < 2^*_s.
\end{equation}

\noindent
Consider the functional $ I: Y(\Omega) \to \mathbb{R}$ defined by
$$ 
I(U) := \frac{1}{2}   \| U \|^2_Y -
\frac{1}{2} \int_{\Omega} ( AU,U )_{\R^2} dx. 
$$
We shall minimize the functional $I$ restricted to the set
$$
\M:= \left\{ U=(u,v) \in Y(\Omega) \; : \; \int_{\Omega} (u^+)^p (v^+)^q dx =1 \right\}.
$$
By virtue of \eqref{d2} the embedding $X(\Omega) \hookrightarrow L^2(\Omega)$
(with the sharp constant $\lambda_{1,s}$), we have
\begin{equation}\label{d4}
 I(U) \geq   \frac{1}{2} \min \left\{ 1, \left( 1 - \frac{\mu_2}{\lambda_{1,s}}\right) \right\}
 \left\|U
\right\|^2_Y \geq 0.
\end{equation}
So define
\begin{equation}
I_0:=\inf_{\M} I ,
\end{equation}
and let $(U_n)= (u_n, v_n) \subset \M$ be a minimizing sequence for $I_0$.
Then $I(U_n)= I_0 + o_n(1) \leq C,$ for some $C>0$ (where $o_n(1) \rightarrow 0,$ as $n \rightarrow \infty$) and consequently by \eqref{d4}, we get
\begin{equation}\label{d5}
[u_n]_s^2 + [v_n]_s^2 =\|u_n\|_X^2+\|v_n\|_X^2=\|U_n\|^2_Y \leq C'.
\end{equation}
Hence, there are two subsequences of $(u_n)\subset X(\Omega)$ and $(v_n)\subset X(\Omega)$ (that we will still label as $u_n$ and $v_n$)
such that $U_n = (u_n,v_n)$ converges to some $U=(u,v)$ in $Y(\Omega)$ weakly and
\begin{align}
\label{weakcc}
& [u]_s^2 \leq \liminf_{n} \frac{C(N,s)}{2}\int_{\mathbb{R}^{2N}} \frac{|u_n(x)-u_n(y)|^2}{|x-y|^{N+2s}} dx dy, \\
\label{weakcc1}
& [v]_s^2 \leq  \liminf_{n} \frac{C(N,s)}{2}\int_{\mathbb{R}^{2N}} \frac{|v_n(x)-v_n(y)|^2}{|x-y|^{N+2s}} dx dy.
\end{align}
Furthermore, in view of the compact embedding $X(\Omega) \hookrightarrow L^\sigma(\Omega)$ for all $\sigma<2^*_s$ (cf.\ \cite[Corollary 7.2]{nezza}), we get that $U_n = (u_n,v_n)$ converges to $(u,v)$ strongly in $(L^{p+q}(\Omega))^2$, as $n\to\infty$.
Of course, up to a further subsequence, we have that $(u_n(x),v_n(x))$ converges to $(u(x),v(x))$ for a.e.\ $x\in\R^N$. 
Now we will show that  $U:=(u,v) \in \M.$
Indeed, since $(U_n) \subset \M$, we have 
\begin{equation}
\label{constr}
\int_{\Omega} (u_n^+)^p (v_n^+)^q dx=1.
\end{equation}
Since
$$
\lim_{n} \int_{\Omega} |u_n|^{p+q}dx = \int_{\Omega} |u|^{p+q}dx,\qquad
\lim_{n}\int_{\Omega} |v_n|^{p+q}dx = 
\int_{\Omega} |v|^{p+q}dx,
$$
we have in particular $|u_n|^{p+q}\leq \eta_1$ and $|v_n|^{p+q}\leq \eta_2$, for 
some $\eta_i \in L^1(\Omega)$ and any $n\in\N$. Then
$$
(u_n^+)^p (x) (v_n^+)^q (x) \leq \frac{p}{p+q}|u_n(x)|^{p+q} + \frac{q}{p+q}|v_n (x)|^{p+q}\leq \eta_1(x)+\eta_2(x),
\quad \mbox{for a.e. in $\Omega$}.
$$
In turn, by the Dominated Convergence Theorem, 
passing to the limit in \eqref{constr}, we obtain
$$
\int_{\Omega} (u^+)^p (v^+)^q dx=1,
$$
and, consequently $U=(u,v) \in \M$ with $u,v \neq 0.$
We now show that $U=(u,v)$ is, indeed, a minimizer for $I$ on $\M$ and both the components $u,v$ are nonnegative.
By passing to the limit in $I(U_n)= I_0 + o_n(1)$, where $o_n(1)\rightarrow 0$ as $n \rightarrow \infty,$ using \eqref{weakcc} and \eqref{weakcc1} and the strong
convergence of $(u_n,v_n)$ to $(u,v)$ in $(L^{2}(\Omega))^2$, as $n\to\infty$,
we conclude that $I(U) \leq I_0$.
Moreover, since $U \in \M$ and $ I_0 = \inf_{\M} I \leq I(U)$, we achieve that $ I(U) = I_0.$  This proves the minimality of $U \in \M.$
On the other hand, let
$$
G(U)= \int_{\Omega} (u^{+})^p (v^{+})^q dx -1,
$$
where $U=(u,v) \in Y(\Omega).$
Note that $G \in C^1$ and since $U \in \M$,
$$
G'(U)U= (p+q) \displaystyle\int_{\Omega}  (u^{+})^p  (v^{+})^q dx = p+q \neq 0,
$$ 
hence, by Lagrange Multiplier Theorem, there exists a multiplier $\mu \in \mathbb{R}$ such that
\begin{equation}\label{d6}
I'(U)(\varphi,\psi)=\mu G'(U)(\varphi,\psi) , \;\;\;\; \forall (\varphi,\psi) \in Y(\Omega).
\end{equation}
Taking $(\varphi,\psi)= (u^-, v^-):=U^-$ in \eqref{d6}, we get
\begin{align*}
\|U^-\|^2_Y &=\frac{C(N,s)}{2}\int_{\R^{2N}}\frac{u^+(x)u^-(y)+u^-(x)u^+(y)}{|x-y|^{N+2s}} dxdy  \\
&+\frac{C(N,s)}{2}\int_{\R^{2N}}\frac{v^+(x)v^-(y)+v^-(x)v^+(y)}{|x-y|^{N+2s}} dxdy \\
&  +\int_{\Omega} ( AU,U^-)_{\R^2} dx.
\end{align*}
Dropping this formula into the expression of $I(U^-)$, we have 
\begin{align}
\label{d7}
 I(U^-)& = \frac{b}{2} \int_{\Omega} ( v^+ u^- + u^+ v^-) dx +\frac{C(N,s)}{4}\int_{\R^{2N}}\frac{u^+(x)u^-(y)+u^-(x)u^+(y)}{|x-y|^{N+2s}} dxdy\\
&+\frac{C(N,s)}{4}\int_{\R^{2N}}\frac{v^+(x)v^-(y)+v^-(x)v^+(y)}{|x-y|^{N+2s}} dxdy\leq 0, \notag
\end{align}
since $b\geq0$, $w^-\leq 0$ and $w^+\geq 0$. Furthermore, 
$$
I(U^-) \geq    \min \left\{ 1, \left( 1 - \frac{\mu_2}{\lambda_{1}}\right) \right\}
 \left\|U^- \right\|^2_Y \geq 0
 $$ 
 and using \eqref{d7}, we get $U^- = (u^-, v^-)=(0,0)$ and therefore 
 $u,v\geq 0.$ We now prove the existence of a positive solution to \eqref{eq:1.1}.
Using again \eqref{d6}, we see that
\begin{equation}\nonumber
\|U\|^2_Y - \displaystyle\int_{\Omega} (AU,U)_{\R^2}dx - \mu (p+q) \displaystyle\int_{\Omega}  u^p v^q dx = 0
\end{equation}
and since $U \in \M$, we conclude that
\begin{equation*}
I_0=I(U)= \frac{ \mu (p+q)}{2}>0,
\end{equation*}
since $I_0$ is positive, via \eqref{d2}. Then, by \eqref{d6},
$U$ satisfies the following system, weakly,
\begin{equation*}
\left\{
\begin{array}{lc}
(-\Delta)^s u=au+bv+\dfrac{2p I_0}{p+q } {u}^{p
-1}{v}^q , &  \; \Omega \\
(-\Delta)^s v=bu+cv+\dfrac{2q I_0}{p+q } {u}^p
{v}^{q-1}, & \; \Omega \\
u=v=0,  & \; \mathbb{R}^N \backslash \Omega.
\end{array}
\right.
\end{equation*}
Now using the homogeneity of system, we 
get $\tau>0$ such that $W = (I_0)^{\tau} U$ is a solution of \eqref{eq:1.1+}. Since $b\geq 0$
and $u,v\geq 0$ we get, in weak sense
\begin{equation*}
\left\{
\begin{array}{lc}
(-\Delta)^s u\geq au &  \; \Omega \\
(-\Delta)^s v\geq cv & \; \Omega \\
u\geq 0,\,\, v\geq 0 & \; \Omega \\
u=v=0,  & \; \mathbb{R}^N \backslash \Omega.
\end{array}
\right.
\end{equation*}
By the strong maximum principle (cf.\ \cite[Theorem 2.5]{IMS}), 
we conclude $u,v>0$ in $\Omega$.
\qed

\section{Existence II, critical case}

\noindent
Next we turn to Theorem \ref{teo2}, for the critical case $p+q=2^*_s$. 
The variational tool used is the Mountain Pass Theorem.
The embedding $X(\Omega)\hookrightarrow  L^{2^*_s}(\Omega)$ is {\em not} 
compact, but we will show that, below a certain level $c$, the associated functional satisfies the Palais-Smale condition.

\subsection{Preliminary results}
We will make use of the following definition
\begin{equation}\label{Ss}
\S_s:=\inf\limits_{u \in X(\Omega)\setminus\{0\}}\S_s (u),
\end{equation}
where
\begin{equation}\label{quoSs}
\S_s (u):=\frac{\displaystyle\int_{\R^{N}}|(-\Delta)^{\frac{s}{2}} u|^2dx}{\Big(\displaystyle\int_{\R^{N}}|u(x)|^{2^{*}_{s}}dx \Big)^{\frac{2}{2^{*}_{s} } }}
\end{equation}
is the associated Rayleigh quotient.
We also define the following related minimizing problems
\begin{equation}\label{quoSpq}
\S_{p+q} (\Omega):=\inf\limits_{u \in X(\Omega)\setminus\{0\}}
\frac{\displaystyle\int_{\R^{N}}|(-\Delta)^{\frac{s}{2}} u|^2dx}
{\Big(\displaystyle\int_{\R^{N}}|u(x)|^{p+q}dx \Big)^{\frac{2}{p+q} }}
\end{equation}
and
\begin{equation}\label{quoSpqtil}
\widetilde{\S}_{p,q} (\Omega):=\inf\limits_{u,v \ \in X(\Omega)\setminus\{0\}} \frac{\displaystyle\int_{\R^{N}}\left(|(-\Delta)^{\frac{s}{2}} u|^2+|(-\Delta)^{\frac{s}{2}} v|^2\right)dx}
{\Big(\displaystyle\int_{\R^{N}}|u(x)|^{p}|v(x)|^{q}dx \Big)^{\frac{2}{p+q} }}.
\end{equation}
We shall also agree that
 $$
 \S_s =\S_{p+q}(\Omega), \quad \widetilde{\S}_{s}:=\widetilde{\S}_{p,q} (\Omega),
 \qquad \text{if $p+q =2^{*}_{s}$}.
 $$
The following result, in the local case, was proved in \cite{alves}. The proof follows 
by arguing as it was made in \cite{alves}, but, for the sake of 
completeness, we present its proof.

\begin{lemma}\label{relation}
	Let $\Omega $ be a domain, not necessarily bounded, and $p+q\leq 2^{*}_s.$ Then
\begin{equation}\label{iq3}
\widetilde{\S}_{p,q}(\Omega )=\left[ \left( \frac pq
\right) ^{\frac q {p+q}}+\left( \frac pq \right) ^{%
\frac{-p }{p+q }}\right] \S_{p+q }(\Omega).
\end{equation}
Moreover, if $w_0$ realizes $\S_{p+q }(\Omega )$ then $(B w_0,C w_0)$
realizes $\widetilde{\S}_{p,q}(\Omega )$, for all positive constants $B$ and
$C$ such that $B/C=\sqrt{p/q}.$
\end{lemma}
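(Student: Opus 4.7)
The plan is to establish matching upper and lower bounds for $\widetilde{\S}_{p,q}(\Omega)$ in terms of $\S_{p+q}(\Omega)$, with the test-function construction in the second half of the statement emerging from the upper-bound computation.

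\textbf{Upper bound.} First I would test the quotient defining $\widetilde{\S}_{p,q}(\Omega)$ with pairs of the form $(Bw,Cw)$, where $w\in X(\Omega)\setminus\{0\}$ is arbitrary and $B,C>0$ are scalars to be optimized. A direct substitution collapses the quotient to
\[
\frac{B^2+C^2}{(B^pC^q)^{2/(p+q)}}\,\S_s(w),
\]
so the scalar coefficient depends only on $t:=C/B$. Minimizing $t\mapsto(1+t^2)/t^{2q/(p+q)}$ on $(0,\infty)$ gives the unique critical point $t^2=q/p$, equivalently $B/C=\sqrt{p/q}$, and a short algebraic simplification (factor out $(p/q)^{q/(p+q)}$ and use $(q/p)\,(p/q)^{q/(p+q)}=(p/q)^{-p/(p+q)}$) shows that the minimum value equals the bracketed constant in \eqref{iq3}. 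Taking the infimum over $w\in X(\Omega)\setminus\{0\}$ then gives the desired upper bound, and the same computation shows that whenever a minimizer $w_0$ of $\S_{p+q}(\Omega)$ exists, the pair $(Bw_0,Cw_0)$ realizes $\widetilde{\S}_{p,q}(\Omega)$ for all positive $B,C$ with $B/C=\sqrt{p/q}$.

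\textbf{Lower bound.} For arbitrary $(u,v)\in X(\Omega)^2$ with $u,v\not\equiv 0$, H\"older's inequality with dual exponents $(p+q)/p$ and $(p+q)/q$ gives
\[
\int_{\R^N}|u|^p|v|^q\,dx\le\Bigl(\int_{\R^N}|u|^{p+q}\,dx\Bigr)^{\!p/(p+q)}\Bigl(\int_{\R^N}|v|^{p+q}\,dx\Bigr)^{\!q/(p+q)};
\]
applying the definition of $\S_{p+q}(\Omega)$ to each factor then yields
\[
\Bigl(\int_{\R^N}|u|^p|v|^q\,dx\Bigr)^{\!2/(p+q)}\le \S_{p+q}(\Omega)^{-1}\,[u]_s^{2p/(p+q)}\,[v]_s^{2q/(p+q)}.
\]
Hence the quotient appearing in the definition of $\widetilde{\S}_{p,q}(\Omega)$ is bounded below by $\S_{p+q}(\Omega)\,(a+b)/(a^{p/(p+q)}b^{q/(p+q)})$, with $a=[u]_s^2$, $b=[v]_s^2$. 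By homogeneity this reduces to minimizing $r\mapsto(r+1)/r^{p/(p+q)}$ in the single variable $r=a/b>0$, whose unique critical point $r=p/q$ delivers the same bracketed constant as the upper-bound step, so the two inequalities match and \eqref{iq3} follows.

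\textbf{Main obstacle.} Nothing of analytic substance occurs beyond H\"older's inequality and the definition of $\S_{p+q}(\Omega)$: the lemma is really an algebraic identity dressed up as a Sobolev inequality. The only point needing some care is the verification that the two one-variable minimizations produce the same closed-form value, together with the observation that the two optimal ratios $B^2/C^2=p/q$ (upper bound) and $[u]_s^2/[v]_s^2=p/q$ (lower bound) are mutually consistent when one specializes to $(u,v)=(Bw_0,Cw_0)$, which is exactly what makes the test-function assertion in the statement sharp.
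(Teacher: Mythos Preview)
Your argument is correct. The upper-bound step is essentially identical to the paper's: both test the $\widetilde{\S}_{p,q}$ quotient with pairs $(Bw,Cw)$ and optimize the scalar prefactor, finding the minimum at $B/C=\sqrt{p/q}$.

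The lower bound, however, proceeds differently. The paper takes a minimizing sequence $(u_n,v_n)$ for $\widetilde{\S}_{p,q}(\Omega)$, rescales the second component so that $\|u_n\|_{L^{p+q}}=\|s_n v_n\|_{L^{p+q}}$, applies \emph{Young's} inequality to control $\int|u_n|^p|s_nv_n|^q$ by the common $L^{p+q}$ norm, and then splits the numerator into two Rayleigh quotients for $\S_{p+q}$, arriving at $g(s_n)\,\S_{p+q}(\Omega)\ge g(\sqrt{p/q})\,\S_{p+q}(\Omega)$. You instead apply \emph{H\"older's} inequality directly to $\int|u|^p|v|^q$, bound each $L^{p+q}$ norm via the definition of $\S_{p+q}(\Omega)$, and reduce to the one-variable optimization of $(r+1)/r^{p/(p+q)}$ in $r=[u]_s^2/[v]_s^2$. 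Your route is slightly cleaner in that it avoids the auxiliary rescaling and works pointwise on pairs rather than through a minimizing sequence; the paper's route has the (minor) advantage that the function $g$ appearing in the lower bound is literally the same function optimized in the upper bound, so no separate algebraic verification is needed to match the two constants. Both approaches are elementary and yield the same conclusion, including the sharpness statement for $(Bw_0,Cw_0)$.
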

\begin{proof}
 Let $\{w_n\}\subset X(\Omega)\setminus\{0\}$ be a minimizing sequence for $\S_{p+q}(\Omega).$ Define $u_n:=s w_n$ and $v_n:=t w_n$, where $s,t>0$ 
 will be chosen later on. By definition \eqref{quoSpqtil}, we get
\begin{equation}\label{eq2}
\frac{g\left(\displaystyle\frac{s}{t}\right)\displaystyle\int_{\R^{N}}|(-\Delta)^{\frac{s}{2}} w_n|^2dx}
{\Big(\displaystyle\int_{\R^{N}}|w_n(x)|^{p+q}dx \Big)^{\frac{2}{p+q} }}\geq \widetilde{\S}_{p,q} (\Omega),
\end{equation}
where $g:\R^+\to\R^+$ is defined by 
$$
g(x):=x^{2q/p+q}+x^{-2p/p+q},\quad x>0.
$$  
The minimum value is assumed by $g$ at the point $x=\sqrt{p/q},$ and it is given by
$$
g(\sqrt{p/q})=\left(\Big(\frac{p}{q}\Big)^{\frac{q}{p+q}}
+\Big(\frac{p}{q}\Big)^{-\frac{p}{p+q}}\right).
$$
Whence, by choosing $s,t$ in \eqref{eq2} so that
 $s/t=\sqrt{p/q}$, and passing to the limit, we obtain
$$
\widetilde{\S}_{p,q} (\Omega)\leq g(\sqrt{p/q})\S_{p+q}(\Omega).
$$
In order to prove the reverse inequality, let $\{(u_n, v_n)\}\subset 
(X(\Omega)\setminus\{0\})^2$ be a minimizing sequence for $\widetilde{\S}_{p,q} (\Omega)$ and define
$z_n:=s_n v_n$ for some $s_n >0$ such that 
$$
\int_{\R^N} |u_n|^{p+q}dx= \int_{\R^N} |z_n|^{p+q}dx.
$$
Then, by Young's inequality, we obtain
\begin{align}\label{eq3}
	\int_{\R^N}|u_n|^{p} |z_n|^{q}dx&\leq  \frac{p}{p+q}\int_{\R^N} |u_n|^{p+q}dx+  \frac{q}{p+q}\int_{\R^N} |z_n|^{p+q}dx \nonumber\\
&= \int_{\R^N} |u_n|^{p+q}dx= \int_{\R^N} |z_n|^{p+q}dx.
\end{align}
Thus, using \eqref{eq3}, we obtain
\begin{align*}
\lefteqn{\frac{\displaystyle\int_{\R^{N}}\left(|(-\Delta)^{\frac{s}{2}} u_n|^2+|(-\Delta)^{\frac{s}{2}} v_n|^2\right)dx}{\Big(\displaystyle\int_{\R^{N}}|u_n(x)|^{p}|v_n(x)|^{q}dx \Big)^{\frac{2}{p+q} }} }\\
&=  s^{2q/p+q}_{n} \frac{\displaystyle\int_{\R^{N}}\left(|(-\Delta)^{\frac{s}{2}} u_n|^2+|(-\Delta)^{\frac{s}{2}} v_n|^2\right)dx}{\Big(\displaystyle\int_{\R^{N}}|u_n(x)|^{p}|z_n(x)|^{q}dx \Big)^{\frac{2}{p+q} }}\\
&\geq  s^{2q/p+q}_{n}\frac{\displaystyle\int_{\R^{N}}|(-\Delta)^{\frac{s}{2}} u_n|^2dx}{\Big(\displaystyle\int_{\R^{N}}|u_n(x)|^{p+q}dx \Big)^{\frac{2}{p+q} }}
 +  s^{-2p/p+q}_{n} \frac{\displaystyle\int_{\R^{N}}|(-\Delta)^{\frac{s}{2}} z_n|^2dx}{\Big(\displaystyle\int_{\R^{N}}|z_n(x)|^{p+q}dx \Big)^{\frac{2}{p+q} }}\\
&\geq  g(s_n) \S_{p+q}(\Omega)\geq g(\sqrt{p/q}) \S_{p+q}(\Omega).
\end{align*}
Therefore, letting $n\to\infty$
in the above inequality, we get the reverse inequality, as desired.
From \eqref{iq3}, the last assertion immediately follows and
the proof is concluded.
\end{proof}

\vskip3pt
\noindent
From \cite[Theorem 1.1]{coti}, we learn that $\S_s$ is attained. Precisely $\S_s=\S_s(\widetilde{u}),$ where
\begin{equation}\label{xo} \widetilde{u}(x)=\frac{k}{( \mu^2 +|x-x_0|^2)^{\frac{N-2s}{2}}}, 
\quad x \in \R^N, \ k \in \R\setminus\{0\}, \,\, \mu>0, \,\, x_0 \in\R^N.
 \end{equation}
Equivalently,
$$
\S_s=\inf\limits_{\begin{tiny} \begin{array}{cc}
                        u \in X(\Omega)\setminus\{0\} \\
                        \|u\|_{L^{2^{*}_s}}=1\\
                      \end{array} \end{tiny}}  \int_{\R^{N}}|(-\Delta)^{\frac{s}{2}} u|^2dx=\int_{\R^{N}}|(-\Delta)^{\frac{s}{2}} \overline u|^2dx,
                    $$
where $\overline{u}(x)=\widetilde{u}(x)/\|\widetilde{u}\|_{L^{2^{*}_s}}.$
In what follows, we suppose that, up to a translation, $x_0 = 0$ in \eqref{xo}.
The function
$$
u^*(x):=
\overline{u}\Big(\frac{x}{{\mathcal S}_s^{\frac{1}{2s}}}\Big),\, \quad x \in \R^N,
$$
is a solution to the problem
\begin{equation}\label{4.6}
(-\Delta)^{s} u= |u|^{2^{*}_s-2}u \quad \text{in $\R^N$},
\end{equation}
verifying the property
\begin{equation}\label{4.7}
\|u^*\|^{2^{*}_s}_{L^{2^{*}_s}(\R^N)}=\S^{N/2s}_{s}.
\end{equation}
Define the family of functions
$$
U_\epsilon(x)=\epsilon^{-\frac{N-2s}{2}}u^* \Big(\frac{x}{\epsilon} \Big), \quad x \in \R^N,
$$
\noindent
then $U_\epsilon$ is a solution of (\ref{4.6}) and verifies, for all $\epsilon >0,$
\begin{equation}\label{4.8}
\int_{\R^{N}}|(-\Delta)^{\frac{s}{2}} U_\epsilon|^2dx=\int_{\R^N} |U_\epsilon (x)|^{2^{*}_s} dx=\S^{N/2s}_{s}.
\end{equation}
Fix $\delta >0$ such that $B_{4\delta} \subset \Omega$ and $\eta \in C^{\infty}(\R^N)$ a cut-off
function such that $0\leq \eta \leq 1$ in $\R^N,$ $\eta=1$ in $B_{\delta}$ and $ \eta=0$ in $B^{c}_{2\delta}=\R^N \setminus B_{2\delta},$ where $B_r=B_r (0)$ is the ball centered at origin with radius $r>0$.
Now define the family of nonnegative truncated functions
\begin{equation}\label{uepsilon}
  u_\epsilon(x)=\eta(x) U_\epsilon(x), \ x \in \R^N,
\end{equation}
and note that $u_\epsilon \in X(\Omega).$
%
%
The following result was proved in \cite{servadeiTAMS}
and it constitutes the natural fractional counterpart of those
proved for the local case in \cite{bn}.

\begin{proposition}\label{prop2} Let $s \in (0,1)$ and $N>2s$. Then the following facts hold.
\begin{itemize}
\item[a)] $$\int_{\R^{N}}|(-\Delta)^{\frac{s}{2}} u_\epsilon|^2dx \leq \S^{N/2s}_{s}+ \O(\epsilon^{N-2s}), \quad\text{as $\epsilon \to 0$}. $$
\item [b)]$$
\int_{\R^N} |u_\epsilon(x)|^{2} dx \geq
\left\{\begin{array}{lcr} C_s \epsilon^{2s} + \O(\epsilon^{N-2s}) &\mbox{if}&N > 4s,\\
 C_s \epsilon^{2s}|\log \epsilon| + \O(\epsilon^{2s}) &\mbox{if}&N = 4s,\\
 C_s \epsilon^{N-2s} + \O(\epsilon^{2s}) &\mbox{if}&2s< N< 4s,\\
\end{array}\right.
$$  
as  $\epsilon \to 0$. Here $C_s$ is a positive constant depending only on $s.$
\item [c)]
$$
\int_{\R^N} |u_\epsilon(x)|^{2^{*}_s} dx=\S^{N/2s}_{s}+ \O(\epsilon^{N}),
\quad\text{as $\epsilon \to 0$}. 
$$
\end{itemize}\end{proposition}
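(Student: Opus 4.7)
The plan is to derive all three estimates by a common scaling strategy: after the change of variable $y=x/\epsilon$, everything is reduced to integrals against the fixed profile $u^*$ on $\R^N$, and the error terms come from the interaction with the cut-off $\eta$ through the polynomial decay $u^*(y)\asymp (1+|y|^2)^{-(N-2s)/2}$ inherited from the explicit form \eqref{xo}.

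I would start with \textbf{part (c)}, which is the most direct. Using $u_\epsilon=\eta U_\epsilon$, changing variables $y=x/\epsilon$ and observing that $2^*_s(N-2s)=2N$,
\[
\int_{\R^N}|u_\epsilon|^{2^*_s}dx=\int_{\R^N}\eta(\epsilon y)^{2^*_s}|u^*(y)|^{2^*_s}dy
=\S_s^{N/2s}-\int_{\R^N}\bigl(1-\eta(\epsilon y)^{2^*_s}\bigr)|u^*(y)|^{2^*_s}dy,
\]
where I used \eqref{4.7}. The remainder is supported in $\{|y|\geq \delta/\epsilon\}$, and since $|u^*|^{2^*_s}$ decays like $|y|^{-2N}$, a direct integration gives the claimed $\O(\epsilon^N)$.

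For \textbf{part (b)} the same change of variables yields
\[
\int_{\R^N}|u_\epsilon|^2\,dx=\epsilon^{2s}\int_{\R^N}\eta(\epsilon y)^2|u^*(y)|^2\,dy,
\]
and the three regimes are dictated by the integrability of $|u^*(y)|^2\asymp |y|^{-2(N-2s)}$ at infinity. If $N>4s$ the integral over $\R^N$ converges, yielding the main term $C_s\epsilon^{2s}$, while the discrepancy with the truncated integral is bounded by $\int_{|y|\geq\delta/\epsilon}|u^*|^2\,dy=\O(\epsilon^{N-4s})$, producing a correction of order $\epsilon^{2s}\cdot\epsilon^{N-4s}=\epsilon^{N-2s}$. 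If $N=4s$ the integral diverges logarithmically, giving $C_s\epsilon^{2s}|\log\epsilon|+\O(\epsilon^{2s})$. If $2s<N<4s$ the integral over $B_{2\delta/\epsilon}$ grows like $(\delta/\epsilon)^{4s-N}$, so multiplying by $\epsilon^{2s}$ delivers the announced $C_s\epsilon^{N-2s}$.

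\textbf{Part (a)} is the main technical step, and where I expect most of the effort. Since $U_\epsilon$ already satisfies $[U_\epsilon]_s^2=\S_s^{N/2s}$ by \eqref{4.8}, the task is to compare $[\eta U_\epsilon]_s^2$ with $[U_\epsilon]_s^2$. Using the pointwise identity
\[
\eta(x)U_\epsilon(x)-\eta(y)U_\epsilon(y)=\eta(x)\bigl(U_\epsilon(x)-U_\epsilon(y)\bigr)+U_\epsilon(y)\bigl(\eta(x)-\eta(y)\bigr),
\]
one expands the square in the Gagliardo integral and splits $\R^{2N}$ according to whether $x,y\in B_\delta$, $x,y\in B_{2\delta}^c$, or the transition annulus. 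In the region where $\eta\equiv 1$ near $0$ (a neighbourhood of the concentration point of $U_\epsilon$), the leading piece reproduces $[U_\epsilon]_s^2$; the remainder terms are controlled using the Lipschitz character of $\eta$ (so that $|\eta(x)-\eta(y)|\leq C\min\{1,|x-y|\}$) and the decay of $U_\epsilon$ far from the origin, exactly as in \cite[Proposition~21]{servadeiTAMS}. After the substitution $y=x/\epsilon$, the non-trivial contributions reduce to tail integrals of $u^*$, each of order $\O(\epsilon^{N-2s})$.

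The main obstacle is genuinely part (a): the nonlocal nature of $(-\Delta)^{s/2}$ forces one to keep track of long-range interactions between the concentrating bubble and the cut-off, and one must be careful to organise the splitting so that no cross term contributes at a lower order than $\epsilon^{N-2s}$. Once the correct bookkeeping is in place, the three statements follow in a unified manner by dimension counting.
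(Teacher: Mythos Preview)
Your sketch is correct and follows the standard route; note, however, that the paper does not supply its own proof of this proposition at all --- it simply records the statement and attributes it to \cite{servadeiTAMS} (see the sentence immediately preceding Proposition~\ref{prop2}). Your outline is precisely the argument carried out in that reference (Propositions~21 and~22 there), including the product expansion for part~(a) and the dimension-dependent tail analysis for part~(b), so there is nothing to compare beyond observing that you have reproduced the cited proof rather than one given in the present paper.
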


\noindent
Consider now, for any $\lambda\geq 0$, the following minimization problem
$$
\S_{s,\lambda}:=\inf_{v\in X(\Omega)\setminus\{0\}}\S_{s,\lambda}(v),
$$
where
$$
\S_{s,\lambda}(v):=\frac{\displaystyle\int_{\R^{N}}|(-\Delta)^{\frac{s}{2}} v|^2dx- \lambda \displaystyle\int_{\R^N}|v(x)|^2 dx}{\Big(\displaystyle\int_{\R^{N}}|v(x)|^{2^{*}_{s}}dx \Big)^{\frac{2}{2^{*}_{s} } }}.
$$
The following result was proved in \cite[Propositions 21 and 22]{servadeiTAMS}
for the first assertion, and in \cite[Corollary 8]{low} for the second assertion.

\begin{proposition}\label{scalar}
	Let $s \in (0,1)$ and $N>2s$. Then the following facts hold.
\begin{itemize}
\item [a)] For $N \geq 4s,$
$$   
\S_{s,\lambda}(u_{\epsilon})< \S_s, \quad \text{for all $\lambda >0$ and any
	 $\epsilon>0$ sufficiently small.}
$$
\item [b)] For $2s < N < 4s$, there exists $\lambda_s >0$ such that 
 $$   \S_{s,\lambda}(u_{\epsilon})< \S_s, 
 \quad \text{for all $\lambda >\lambda_s$ and any
 	$\epsilon>0$ sufficiently small.}
 $$
\end{itemize}
\end{proposition}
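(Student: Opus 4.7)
\medskip

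\noindent\textbf{Plan for the proof of Proposition~\ref{scalar}.}
The strategy is to test the Rayleigh quotient $\S_{s,\lambda}(\cdot)$ against the truncated family $u_\epsilon$ from \eqref{uepsilon} and read off the threshold on $\lambda$ directly from the sharp asymptotics in Proposition~\ref{prop2}(a)--(c).

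First I would normalize the denominator. From Proposition~\ref{prop2}(c) one has $\int_{\R^N}|u_\epsilon|^{2^*_s}dx=\S_s^{N/2s}+\O(\epsilon^N)$, so since the exponent $2/2^*_s=(N-2s)/N$ lies strictly in $(0,1)$, a first-order Taylor expansion gives
$$
\Big(\int_{\R^N}|u_\epsilon|^{2^*_s}dx\Big)^{2/2^*_s}=\S_s^{(N-2s)/2s}+\O(\epsilon^N).
$$
Combining this with the upper bound on the Gagliardo seminorm from Proposition~\ref{prop2}(a) and using the identity $\S_s=\S_s^{N/2s}/\S_s^{(N-2s)/2s}$, the desired inequality $\S_{s,\lambda}(u_\epsilon)<\S_s$ reduces, after cross-multiplying by the (positive, for $\epsilon$ small) denominators, to
$$
\lambda\int_{\R^N}|u_\epsilon|^2\,dx \;>\; K_1\,\epsilon^{N-2s}+K_2\,\epsilon^N,
$$
for absolute constants $K_1,K_2>0$ depending only on $N,s$ and on the implicit constants in Proposition~\ref{prop2}(a),(c). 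Since $\epsilon^N=o(\epsilon^{N-2s})$ as $\epsilon\to 0$, the whole matter reduces to controlling a multiple of $\epsilon^{N-2s}$ by $\lambda\int|u_\epsilon|^2$.

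Now the trichotomy in Proposition~\ref{prop2}(b) delivers case (a) for free. If $N>4s$, then $N-2s>2s$, so $\int|u_\epsilon|^2\gtrsim C_s\epsilon^{2s}$ dominates $\epsilon^{N-2s}$ as $\epsilon\to 0$ for any fixed $\lambda>0$; in the borderline case $N=4s$ the logarithmic gain $|\log\epsilon|\to\infty$ plays exactly the same role, again for arbitrary $\lambda>0$. In each sub-case one simply picks $\epsilon$ small enough depending on $\lambda$.

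The genuinely delicate case is $2s<N<4s$, which forces part (b). Here $\int|u_\epsilon|^2\gtrsim C_s\epsilon^{N-2s}$ scales exactly like the right-hand side of the reduced inequality, so no compactness-type gain can be extracted merely by shrinking $\epsilon$. Instead, I would track the implicit constants explicitly and set $\lambda_s:=K_1/C_s$; for any $\lambda>\lambda_s$ the leading terms satisfy $\lambda C_s\epsilon^{N-2s}>K_1\epsilon^{N-2s}$, and the subleading $\O(\epsilon^{2s})$ and $\O(\epsilon^N)$ corrections (which are all of order strictly higher than $\epsilon^{N-2s}$ since $N-2s<2s$) are absorbed for $\epsilon$ small. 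The main obstacle is precisely this last quantitative bookkeeping: one must verify that the constants $C_s$ coming from Proposition~\ref{prop2}(b) and $K_1$ coming from (a) and (c) can be made genuinely comparable, so that an \emph{explicit} threshold $\lambda_s$ emerges rather than merely an existential one.
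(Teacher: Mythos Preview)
Your approach is essentially identical to the paper's: both plug the asymptotics of Proposition~\ref{prop2}(a)--(c) into the Rayleigh quotient $\S_{s,\lambda}(u_\epsilon)$ and compare the resulting powers of $\epsilon$ case by case, with the paper writing the expansion as $\S_{s,\lambda}(u_\epsilon)\leq \S_s+\epsilon^{\alpha}(\O(1)-\lambda\widetilde C_s)+\ldots$ rather than cross-multiplying as you do. One small point: your closing concern about obtaining an \emph{explicit} threshold $\lambda_s$ (versus a merely existential one) is unnecessary---the statement only asserts existence of $\lambda_s$, so once you have $\lambda C_s\epsilon^{N-2s}$ on one side and $K_1\epsilon^{N-2s}+o(\epsilon^{N-2s})$ on the other, taking any $\lambda_s>K_1/C_s$ finishes the argument without further bookkeeping.
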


\begin{proof} For the sake of the completeness, we sketch the proof. 
	\vskip4pt
\noindent{\sc Case:} $N> 4s.$ By Proposition \ref{prop2}, we infer 
\begin{align*}
 \S_{s,\lambda}(u_{\epsilon}) &\leq \frac{ \S^{N/2s}_{s}+ \O(\epsilon^{N-2s})-\lambda C_s\epsilon^{2s}}{\Big( \S^{N/2s}_{s}+ \O(\epsilon^{N})\Big)^{\frac{2}{2^{*}_{s} } }}\\
&\leq \S_{s}+ \O(\epsilon^{N-2s})-\lambda \widetilde{C}_s\epsilon^{2s}, \\
&\leq \S_{s}+ \epsilon^{2s}(\O(\epsilon^{N-4s})-\lambda \widetilde{C}_s)\\
&< \S_s, \quad \text{for all $\lambda>0$ and $\epsilon>0$ small enough
	and some $\widetilde{C}_s>0$}.
\end{align*}

\noindent{\sc Case:} $N=4s.$

\begin{align*}
 \S_{s,\lambda}(u_{\epsilon})&\leq \frac{ \S^{N/2s}_{s}+ \O(\epsilon^{N-2s})-\lambda C_s\epsilon^{2s}|\log \epsilon|+ \O(\epsilon^{2s})}{\Big( \S^{N/2s}_{s}+ \O(\epsilon^{N})\Big)^{\frac{2}{2^{*}_{s} } }}\\
&\leq \S_{s}+ \O(\epsilon^{2s})-\lambda \widetilde{C}_s\epsilon^{2s}|\log \epsilon|,\\
&\leq \S_{s}+ \epsilon^{2s}(\O(1)-\lambda \widetilde{C}_s|\log \epsilon|)\\
&< \S_s, \quad \text{for all $\lambda>0$ and $\epsilon>0$ small enough
	and some  $\widetilde{C}_s>0$}.
\end{align*}

\noindent{\sc Case:} $2s<N<4s.$

\begin{align*}
 \S_{s,\lambda}(u_{\epsilon})&\leq\frac{ \S^{N/2s}_{s}+ \O(\epsilon^{N-2s})-\lambda C_s\epsilon^{N-2s}+\O(\epsilon^{2s})}{\Big( \S^{N/2s}_{s}+ \O(\epsilon^{N})\Big)^{\frac{2}{2^{*}_{s} } }}\\
&\leq \S_{s}+ \epsilon^{N-2s}(\O(1)-\lambda \widetilde{C}_s) + \O(\epsilon^{2s}), \\
&< \S_s, \quad \text{for all $\lambda>0$ large enough ($\lambda \geq \lambda_s$),  
	$\epsilon$ sufficiently small and some $\widetilde{C}_s>0$}.
\end{align*}
This concludes the sketch.
\end{proof}

\noindent 
Even if it is not strictly necessary for the proof of our main result, we state
the following Corollary for possible future usage.

\begin{corollary}
	\label{prop4} 
	Suppose that $\mu_1$ given in $(\ref{controllo})$ is positive and let
$$
\widetilde{\S}_{s,A}=\inf_{u,v\ \in X(\Omega)\setminus\{0\}}\S_{s,A}(u,v),
$$
where
$$
\S_{s,A}(u,v)=\frac{\displaystyle\int_{\R^{N}}\left(|(-\Delta)^{\frac{s}{2}} u|^2+|(-\Delta)^{\frac{s}{2}} v|^2\right)dx-
		\displaystyle \int_{\R^N}\big(A(u(x),v(x)),(u(x),v(x))\big)_{\R^2} dx}{\Big(\displaystyle \int_{\R^{N}}|u(x)|^p|v(x)|^{q}dx \Big)^{\frac{2}{2^{*}_{s} } }}
$$ 
where $ p+q= 2^{*}_{s}.$ Then the following facts holds
\begin{itemize}
\item [a)] If $N \geq 4s,$ then
$$   \widetilde{\S}_{s,A}< \widetilde{\S}_{s}.$$
\item [b)] For $2s < N < 4s,$ there exists $\mu_{1,s} >0,$ such that if $\mu_1 > \mu_{1,s},$ we have
 $$ 
 \widetilde{\S}_{s,A}< \widetilde{\S}_{s}.
 $$
\end{itemize}
\end{corollary}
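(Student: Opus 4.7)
The plan is to leverage the variational characterization of $\widetilde{\S}_s$ through $\S_s$ established in Lemma~\ref{relation}, and reduce the corollary to the scalar estimates of Proposition~\ref{scalar}. Motivated by the fact that minimizers of $\widetilde{\S}_{p,q}(\Omega)$ arise as $(Bw, Cw)$ with $B/C = \sqrt{p/q}$, I would test the quotient $\S_{s,A}$ on pairs $(Bu_\epsilon, Cu_\epsilon)$, where $u_\epsilon$ is the truncated Aubin--Talenti function in \eqref{uepsilon} and $B,C>0$ satisfy $B/C = \sqrt{p/q}$.

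A direct computation, using $p+q = 2^*_s$ and setting $\alpha := (A(B,C),(B,C))_{\R^2} = aB^2 + 2bBC + cC^2$, gives
$$\S_{s,A}(Bu_\epsilon, Cu_\epsilon) = \frac{(B^2+C^2)\int_{\R^N}|(-\Delta)^{\frac{s}{2}}u_\epsilon|^2\,dx - \alpha\int_{\R^N}u_\epsilon^2\,dx}{(B^pC^q)^{\frac{2}{p+q}}\left(\int_{\R^N}|u_\epsilon|^{2^*_s}\,dx\right)^{\frac{2}{2^*_s}}}.$$
The key algebraic observation is that, for $t := B/C$, one has $(B^2+C^2)/(B^pC^q)^{2/(p+q)} = t^{2q/(p+q)} + t^{-2p/(p+q)} = g(t)$, so choosing $t = \sqrt{p/q}$ produces exactly the minimal value $g(\sqrt{p/q})$ appearing in Lemma~\ref{relation}. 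Introducing $\lambda := \alpha/(B^2+C^2)$, this yields the clean factorization
$$\S_{s,A}(Bu_\epsilon, Cu_\epsilon) = g(\sqrt{p/q})\,\S_{s,\lambda}(u_\epsilon),$$
and by inequality \eqref{controllo} applied to $U=(B,C)$ one has $\alpha \geq \mu_1(B^2+C^2)$, hence $\lambda \geq \mu_1 > 0$.

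Combining this with the identity $\widetilde{\S}_s = g(\sqrt{p/q})\,\S_s$ (again from Lemma~\ref{relation}), the strict inequality $\widetilde{\S}_{s,A} < \widetilde{\S}_s$ follows as soon as $\S_{s,\lambda}(u_\epsilon) < \S_s$ for some small $\epsilon>0$. For part (a), $N\geq 4s$: since $\lambda \geq \mu_1 > 0$, Proposition~\ref{scalar}(a) applies directly. For part (b), $2s<N<4s$: set $\mu_{1,s}:=\lambda_s$ where $\lambda_s$ is provided by Proposition~\ref{scalar}(b); whenever $\mu_1 > \mu_{1,s}$ one has $\lambda \geq \mu_1 > \lambda_s$, and Proposition~\ref{scalar}(b) again gives the desired strict inequality.

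The only nontrivial step is the algebraic factorization exhibiting the scalar quotient $\S_{s,\lambda}(u_\epsilon)$ inside $\S_{s,A}(Bu_\epsilon, Cu_\epsilon)$; once this is in place, the corollary is a direct consequence of Proposition~\ref{scalar}, so there is no genuine analytic obstacle — the threshold $\mu_{1,s}$ can simply be chosen equal to the scalar threshold $\lambda_s$.
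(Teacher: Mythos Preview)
Your argument is correct and follows essentially the same route as the paper: test $\S_{s,A}$ on $(Bu_\epsilon,Cu_\epsilon)$ with $B/C=\sqrt{p/q}$, factor out $g(\sqrt{p/q})$, and reduce to Proposition~\ref{scalar} via Lemma~\ref{relation}. The only cosmetic difference is that the paper applies \eqref{controllo} directly in the numerator to obtain the inequality $\S_{s,A}(Bu_\epsilon,Cu_\epsilon)\le g(\sqrt{p/q})\,\S_{s,\mu_1}(u_\epsilon)$, whereas you first write the exact identity with $\lambda=\alpha/(B^2+C^2)$ and then use $\lambda\ge\mu_1$; both lead to the same conclusion.
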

\begin{proof}
From Proposition \ref{scalar}, we have
\begin{itemize}
\item [a)] For $N \geq 4s,$ we have
$$   
\S_{s,\mu_1}(u_{\epsilon})< \S_s, \quad \text{if $\mu_1 >0$
	and provided $\epsilon >0$ is sufficiently small.}
$$
\item [b)] For $2s < N < 4s,$ there exists $\mu_{1,s }>0,$ such that if $\mu_1 > \mu_{1,s},$ we have
 $$ 
 \S_{s,\mu_1}(u_{\epsilon})< \S_s, \quad \text{provided $\epsilon >0$ is sufficiently small.}
 $$
\end{itemize}
Let $B,C>0$ be such that $\frac{B}{C}=\sqrt{\frac{p}{q}}.$
From \eqref{controllo} and the above inequalities, we infer that
\begin{align*}
\widetilde{\S}_{s,A}    &\leq \S_{s,A}(Bu_{\epsilon}, C u_{\epsilon})\\
&\leq \frac{(B^2 +C^2) \Big(\displaystyle \int_{\R^{N}}|(-\Delta)^{\frac{s}{2}} u_\epsilon|^2dx -
\mu_1  \displaystyle\int_{\R^N}|u_{\epsilon}(x)|^2 dx \Big) }{(B^p C^q)^{2/2^{*}_{s} } \Big(\displaystyle\int_{\R^N}|u_{\epsilon}(x)|^{2^{*}_{s}}dx \Big)^{2/2^{*}_{s}}}\\
&=\Big[\Big(\frac{p}{q}\Big)^{q/p+q}+\Big(\frac{p}{q}\Big)^{-p/p+q}\Big]\S_{s,\mu_1}(u_{\epsilon})\\
&< \Big[\Big(\frac{p}{q}\Big)^{q/p+q}+\Big(\frac{p}{q}\Big)^{-p/p+q}\Big]\S_{s}= \widetilde{\S}_{s}.
\end{align*}
This concludes the proof.
\end{proof}

\subsection{Proof of existence II}

In order to get weak solutions to system \eqref{eq:1.1+}, 
we now define the functional  $J:Y(\Omega)\rightarrow \R$ by setting
\begin{align*}
J(u,v)&=\frac{1}{2}\int_{\R^{N}}\left(|(-\Delta)^{\frac{s}{2}} u|^2+|(-\Delta)^{\frac{s}{2}} v|^2\right)dx\\
&- \frac{1}{2}\int_{\R^N}(A(u,v),(u,v))_{\R^2} dx-\frac{2}{ 2^{*}_{s}} \int_{\R^N}(u^+)^p (v^+)^q dx,
\end{align*}
whose Gateaux derivative is given by 
\begin{align}
\label{deriv}
& J'(u,v)(\phi, \psi)  \\ &=\frac{C(N,s)}{2}\int_{\R^{2N}}\frac{(u(x)-u(y))(\phi(x)-\phi(y))+(v(x)-v(y))(\psi(x)-\psi(y))}{|x-y|^{N+2s}} dxdy \notag \\
&-\int_{\Omega}(A(u,v),(\phi,\psi))_{\R^2} dx-\frac{2p}{ 2^{*}_{s}} \int_{\Omega} (u^+)^{p -1}(v^+)^q \phi \,dx
-\frac{2q}{ 2^{*}_{s}} \int_{\Omega} (v^+)^{q -1}(u^+)^p \psi \, dx, \notag
\end{align}
for every $(\phi,\psi)\in Y(\Omega)$.
We shall observe that the weak solutions of problem \eqref{eq:1.1+} correspond to the critical 
points of the functional $J.$
%
Under hypothesis $0<\mu_1 \leq \mu_2 <\lambda_{1,s}$, our goal 
is to prove Theorem \ref{teo2}.
We first show that $J$ satisfies the Mountain Pass Geometry. 

\begin{proposition}\label{MPG} Suppose $\mu_2< \lambda_{1,s}.$ The functional $J$ satisfies
\begin{itemize}
\item[a)] There exist $\beta, \rho>0$ such that $J(u,v)\geq \beta$ if $\|(u,v)\|_{Y}=\rho$;
\item[b)] there exists $(e_1,e_2) \in Y(\Omega) \backslash\{(0,0)\}$ with $\|(e_1,e_2)\|_{Y}>\rho$ such that $J(e_1,e_2)\leq 0$.
\end{itemize}
\end{proposition}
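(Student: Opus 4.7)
The plan is to use standard mountain--pass geometry arguments. For part (a), the goal is to bound the quadratic part of $J$ from below by a positive multiple of $\|U\|_Y^2$ and the critical term from above by $C\|U\|_Y^{p+q}$ with $p+q=2_s^*>2$. For the quadratic part, \eqref{controllo} yields $(AU,U)_{\R^2}\leq \mu_2|U|^2$ pointwise, while the variational characterization \eqref{lambda1} gives $\int_\Omega |U|^2 dx\leq \lambda_{1,s}^{-1}\|U\|_Y^2$. Setting $\mu_2^+:=\max\{\mu_2,0\}$, these combine to give
\[
-\frac{1}{2}\int_\Omega (AU,U)_{\R^2}dx\geq -\frac{\mu_2^+}{2\lambda_{1,s}}\|U\|_Y^2,
\]
so that
\[
\frac{1}{2}\|U\|_Y^2-\frac{1}{2}\int_\Omega (AU,U)_{\R^2}dx\geq \frac{1}{2}\Big(1-\frac{\mu_2^+}{\lambda_{1,s}}\Big)\|U\|_Y^2,
\]
with strictly positive coefficient thanks to the hypothesis $\mu_2<\lambda_{1,s}$. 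For the critical term, Young's inequality followed by the fractional Sobolev embedding $\|w\|_{L^{2_s^*}}^{2_s^*}\leq \S_s^{-2_s^*/2}\|w\|_X^{2_s^*}$ (at the critical exponent $p+q=2_s^*$) gives
\[
\int_\Omega (u^+)^p(v^+)^q\, dx\leq \frac{p}{p+q}\|u\|_{L^{2_s^*}}^{2_s^*}+\frac{q}{p+q}\|v\|_{L^{2_s^*}}^{2_s^*}\leq C\|U\|_Y^{p+q}.
\]
Putting these together yields $J(U)\geq c_1\|U\|_Y^2-c_2\|U\|_Y^{p+q}$ for some positive constants $c_1,c_2$. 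Since $p+q>2$, the map $t\mapsto c_1 t^2-c_2 t^{p+q}$ is strictly positive on a punctured neighbourhood of $0$, so picking $\rho>0$ sufficiently small and setting $\beta:=c_1\rho^2-c_2\rho^{p+q}>0$ establishes (a).

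For (b), I would pick a fixed nontrivial nonnegative test pair and exploit the superquadratic growth of the nonlinearity to drive $J$ to $-\infty$ along a ray. Concretely, let $\varphi_1>0$ denote a first eigenfunction of $(-\Delta)^s$ from \eqref{lambda1}, and set $(u_0,v_0):=(\varphi_1,\varphi_1)\in Y(\Omega)$; any nonnegative pair with $\int_\Omega u_0^p v_0^q\,dx>0$ works. Then for $t>0$,
\[
J(tu_0,tv_0)=\frac{t^2}{2}\|(u_0,v_0)\|_Y^2-\frac{t^2}{2}\int_\Omega (A(u_0,v_0),(u_0,v_0))_{\R^2}dx-\frac{2t^{p+q}}{2_s^*}\int_\Omega \varphi_1^{p+q}\,dx.
\]
Since $p+q=2_s^*>2$ and the coefficient of $t^{p+q}$ is strictly negative, $J(tu_0,tv_0)\to -\infty$ as $t\to +\infty$. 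Choosing $t$ large enough that both $\|(tu_0,tv_0)\|_Y>\rho$ and $J(tu_0,tv_0)\leq 0$, and setting $(e_1,e_2):=(tu_0,tv_0)$, yields (b).

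The argument is essentially routine; the only delicate point is that the hypothesis $\mu_2<\lambda_{1,s}$ allows $\mu_2$ to be negative, zero, or positive, so one must produce a strictly positive lower bound for the quadratic form uniformly in all three sign regimes. Passing to $\mu_2^+$ and invoking \eqref{lambda1} handles this transparently and is the cleanest way to avoid a case-splitting.
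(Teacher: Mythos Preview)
Your proof is correct and follows essentially the same route as the paper's: bound the quadratic part below via \eqref{controllo} and the Poincar\'e-type inequality from \eqref{lambda1}, bound the coupling term above by the Sobolev embedding, and for (b) scale a fixed nonnegative pair along a ray. Your use of $\mu_2^+$ to absorb the sign of $\mu_2$ is a slightly cleaner variant of what the paper does elsewhere (cf.\ the $\min\{1,1-\mu_2/\lambda_{1,s}\}$ in \eqref{d4}); the paper's own proof of this proposition simply writes the coefficient $(1-\mu_2/\lambda_{1,s})$, which is fine in the setting of Theorem~\ref{teo2} where $\mu_2>0$.
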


\begin{proof}
	$a)$ By means of \eqref{controllo}, using 
$$
(u^+)^p (v^+)^q \leq |u|^{p+q} + |v|^{p+q}= |u|^{2^*_s} + |v|^{2^*_s} 
$$ 
and Poincar\'e inequality, we have
$$
J(u,v)\geq\frac{1}{2} \Big( 1 - \frac{\mu_2}{\lambda_{1,s}} \Big)\|(u,v)\|^{2}_{Y}- C\|(u,v)\|^{2^{*}_s}_{Y},
$$
where $C>0$ is a constant. 

\noindent
$b)$ Choose $(\tilde u_0,\tilde v_0)\in Y(\Omega)\setminus\{(0,0)\}$ with
$\tilde u_0\geq 0$, $\tilde v_0\geq 0$ a.e.\ and $\tilde u_0\tilde v_0\neq0$. Then
\begin{align*}
J(t\tilde u_0,t\tilde v_0)&=\frac{t^2}{2}\int_{\R^{N}}\left(|(-\Delta)^{\frac{s}{2}} \tilde u_0|^2+|(-\Delta)^{\frac{s}{2}} \tilde v_0|^2\right)dx\\
&- \frac{t^2}{2}\int_{\R^N}(A(\tilde u_0,\tilde v_0),(\tilde u_0,\tilde v_0)) dx-\frac{2t^{2^{*}_s}}{ 2^{*}_{s}} \int_{\R^N}\tilde u_0^p \tilde v_0^q dx,
\end{align*}
by choosing $t>0$ sufficiently large, the assertion follows.
This concludes the proof.
\end{proof}

\noindent
Therefore, by the previous facts, by the Mountain Pass Theorem it follows
that there exists a sequence $\{(u_n,v_n)\}\subset
Y(\Omega)$, so called {\it  $(PS)_c$--Palais Smale sequence at level $c$}  , such that
\begin{equation}
\label{PS}
J(u_n,v_n)\rightarrow  c, \quad  \|J'(u_n,v_n)\|\rightarrow 0,
\end{equation}
where $c$  is given by
$$
c=\inf_{\gamma \in \Gamma}\max_{t\in[0,1]}J(\gamma(t)),
$$
with
$$
\Gamma=\{\gamma \in C([0,1],Y(\Omega)): \gamma(0)=(0,0)\ \mbox{and}\ J(\gamma(1))\leq 0\}.
$$
\noindent
Next we turn to the boundedness of $\{(u_n,v_n)\}$ in $Y(\Omega)$.
\begin{lemma}[Boundedness]
\label{bounded}
The $(PS)$ sequence $\{(u_n,v_n)\} \subset Y(\Omega)$ is bounded.
\end{lemma}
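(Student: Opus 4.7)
The plan is to form the standard Brezis-Nirenberg combination $J(u_n,v_n)-\frac{1}{2^*_s}J'(u_n,v_n)(u_n,v_n)$, which kills the nonlinear term and leaves a coercive-looking quadratic form. First I note that, since $(u^+)^{p-1}u=(u^+)^p$ and $(v^+)^{q-1}v=(v^+)^q$, plugging $(\varphi,\psi)=(u_n,v_n)$ into \eqref{deriv} gives
\begin{equation*}
J'(u_n,v_n)(u_n,v_n)=\|(u_n,v_n)\|_Y^2-\int_\Omega (AU_n,U_n)_{\R^2}\,dx-\frac{2(p+q)}{2^*_s}\int_\Omega (u_n^+)^p(v_n^+)^q\,dx,
\end{equation*}
and because $p+q=2^*_s$ the last coefficient is exactly $2$, matching the nonlinear term in $J$.

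Consequently, subtracting $\frac{1}{2^*_s}J'(u_n,v_n)(u_n,v_n)$ from $J(u_n,v_n)$ cancels the critical integral and produces
\begin{equation*}
J(u_n,v_n)-\frac{1}{2^*_s}J'(u_n,v_n)(u_n,v_n)=\Bigl(\frac12-\frac{1}{2^*_s}\Bigr)\Bigl[\|(u_n,v_n)\|_Y^2-\int_\Omega (AU_n,U_n)_{\R^2}\,dx\Bigr].
\end{equation*}
By \eqref{controllo} and the variational characterization \eqref{lambda1} of $\lambda_{1,s}$, for every $(u,v)\in Y(\Omega)$
\begin{equation*}
\int_\Omega (AU,U)_{\R^2}\,dx\leq \mu_2^+\int_\Omega|U|^2\,dx\leq \frac{\mu_2^+}{\lambda_{1,s}}\|(u,v)\|_Y^2,
\end{equation*}
and since $\mu_2<\lambda_{1,s}$ the constant $\kappa:=\min\{1,1-\mu_2/\lambda_{1,s}\}$ is strictly positive. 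Therefore
\begin{equation*}
\|(u_n,v_n)\|_Y^2-\int_\Omega (AU_n,U_n)_{\R^2}\,dx\geq \kappa\,\|(u_n,v_n)\|_Y^2.
\end{equation*}

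Combining the two displays with the Palais-Smale information \eqref{PS} yields
\begin{equation*}
c+o(1)+o(1)\|(u_n,v_n)\|_Y\geq \Bigl(\frac12-\frac{1}{2^*_s}\Bigr)\kappa\,\|(u_n,v_n)\|_Y^2,
\end{equation*}
where the linear-in-norm error comes from $|J'(u_n,v_n)(u_n,v_n)|\leq \|J'(u_n,v_n)\|_{Y^*}\|(u_n,v_n)\|_Y$. Since $\frac12-\frac{1}{2^*_s}>0$ and $\kappa>0$, this quadratic-versus-linear inequality forces $\|(u_n,v_n)\|_Y$ to stay bounded. There is no real obstacle here beyond keeping track of the identity $(u^+)^{p-1}u=(u^+)^p$ (and similarly for $v$) so that the cross term $\int(AU_n,U_n)$ and the critical integral cancel cleanly; the rest is the standard Brezis-Nirenberg test function argument enabled by the hypothesis $\mu_2<\lambda_{1,s}$.
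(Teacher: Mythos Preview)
Your proof is correct and follows essentially the same route as the paper: form $J(u_n,v_n)-\frac{1}{2^*_s}J'(u_n,v_n)(u_n,v_n)$ to cancel the critical term, then use \eqref{controllo} together with the Poincar\'e inequality \eqref{lambda1} and the hypothesis $\mu_2<\lambda_{1,s}$ to get a coercive lower bound. Your use of $\mu_2^+$ and $\kappa=\min\{1,1-\mu_2/\lambda_{1,s}\}$ is a slightly more careful bookkeeping than the paper's (which implicitly relies on $\mu_2>0$ from the standing hypotheses of Theorem~\ref{teo2}), but the argument is the same.
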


\begin{proof}
We have for every $n\in\N$
\begin{align*}
C +C\|(u_n,v_n)\|_{Y}&\geq J(u_n,v_n)-\frac{1}{2^{*}_s}J'(u_n,v_n)(u_n,v_n)\\
&= \Big(\frac{1}{2}-\frac{1}{2^{*}_s}\Big)\|(u_n,v_n)\|^{2}_{Y} -\Big(\frac{1}{2}-\frac{1}{2^{*}_s}\Big)\int_{\R^N}\big(A(u_n,v_n),(u_n,v_n)\big)_{\R^2} dx\\
&\geq \Big(\frac{1}{2}-\frac{1}{2^{*}_s}\Big) \Big( 1 - \frac{\mu_2}{\lambda_{1,s}}\Big)\|(u_n,v_n)\|^{2}_{Y}.
\end{align*}
Since $\mu_2 < \lambda_{1,s}$, the assertion follows.
\end{proof}

\noindent
The next result is useful to get nonnegative solutions as weak limits 
of Palais-Smale sequences. The same argument shows that a critical point of $J$
corresponds to a nonnegative solution to \eqref{eq:1.1}. 

\begin{lemma}\label{pos}
Assume that $b\geq 0$ and $\mu_2<\lambda_{1,s}$. Let $\{(u_n,v_n)\} \subset Y(\Omega)$ be a Palais-Smale sequence for
the functional $J$. Then
$$
\lim_n \|(u^-_n,v^-_n)\|_{Y}=0.
$$
In particular, the weak limit $(u,v)$ of the PS-sequence $\{(u_n,v_n)\}$ has nonnegative components.
\end{lemma}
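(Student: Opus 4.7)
The plan is to test $J'(u_n,v_n)$ against the negative-part pair $(u_n^-, v_n^-)\in Y(\Omega)$ and exploit the sign conditions $b\geq 0$ and $\mu_2<\lambda_{1,s}$ to show that the resulting lower bound is essentially coercive in $\|(u_n^-,v_n^-)\|_Y$. Since the Palais-Smale condition forces $J'(u_n,v_n)(u_n^-,v_n^-)$ to be $o(\|(u_n^-,v_n^-)\|_Y)$, the conclusion follows.

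First I would handle the quadratic seminorm term in \eqref{deriv}. Writing $u_n=u_n^++u_n^-$ and using that $u_n^+u_n^-\equiv 0$, a direct pointwise computation gives
\begin{equation*}
(u_n(x)-u_n(y))(u_n^-(x)-u_n^-(y))=(u_n^-(x)-u_n^-(y))^2-u_n^+(x)u_n^-(y)-u_n^+(y)u_n^-(x),
\end{equation*}
and the last two summands are nonnegative since $u_n^+\geq 0$, $u_n^-\leq 0$. Integrating against $|x-y|^{-N-2s}$ and doing the same for $v_n$, I obtain
\begin{equation*}
\langle (u_n,v_n),(u_n^-,v_n^-)\rangle_Y \geq \|(u_n^-,v_n^-)\|_Y^2.
\end{equation*}

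Next I would treat the matrix term. Splitting $(u_n,v_n)=(u_n^+,v_n^+)+(u_n^-,v_n^-)$, the cross product yields
\begin{equation*}
(A(u_n^+,v_n^+),(u_n^-,v_n^-))_{\R^2}=b\,(u_n^+v_n^-+v_n^+u_n^-)\leq 0,
\end{equation*}
because $b\geq 0$ and the products of positive and negative parts have opposite signs. Using \eqref{controllo} on the remaining diagonal piece gives
\begin{equation*}
\int_\Omega (A(u_n,v_n),(u_n^-,v_n^-))_{\R^2}\,dx \leq \mu_2^+\,\|(u_n^-,v_n^-)\|_{L^2\times L^2}^2 \leq \tfrac{\mu_2^+}{\lambda_{1,s}}\|(u_n^-,v_n^-)\|_Y^2,
\end{equation*}
where $\mu_2^+=\max\{\mu_2,0\}$ and I invoked \eqref{lambda1}. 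The two nonlinear terms in \eqref{deriv}, namely $\int (u_n^+)^{p-1}(v_n^+)^q u_n^-\,dx$ and its $v$-counterpart, vanish identically since $u_n^+ u_n^-=v_n^+ v_n^-=0$ a.e.

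Combining the three previous estimates,
\begin{equation*}
\Big(1-\tfrac{\mu_2^+}{\lambda_{1,s}}\Big)\|(u_n^-,v_n^-)\|_Y^2 \leq J'(u_n,v_n)(u_n^-,v_n^-)\leq \|J'(u_n,v_n)\|_{Y^*}\,\|(u_n^-,v_n^-)\|_Y.
\end{equation*}
The coefficient on the left is strictly positive by the assumption $\mu_2<\lambda_{1,s}$, so $\|(u_n^-,v_n^-)\|_Y\to 0$ from \eqref{PS}. For the final assertion, Lemma~\ref{bounded} gives that any PS-sequence is bounded, so up to a subsequence $(u_n,v_n)\rightharpoonup (u,v)$ in $Y(\Omega)$ and by the compact embedding $X(\Omega)\hookrightarrow L^2(\Omega)$ we have $u_n\to u$, $v_n\to v$ in $L^2$, whence $u_n^-\to u^-$ and $v_n^-\to v^-$ in $L^2$; combined with $\|(u_n^-,v_n^-)\|_Y\to 0$ this forces $u^-=v^-=0$, so $(u,v)$ has nonnegative components.

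The main technical point is the first step: producing the correct sign in the Gagliardo-type bilinear form $\langle (u_n,v_n),(u_n^-,v_n^-)\rangle_Y$, which relies on the elementary but essential observation that the nonlocal cross terms between $u^+$ and $u^-$ contribute nonnegatively. Everything else is a straightforward assembly of sign conditions ($b\geq 0$ kills the off-diagonal matrix interaction, and $u_n^+u_n^-=0$ kills the critical nonlinearities).
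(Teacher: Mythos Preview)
Your proof is correct and follows essentially the same route as the paper: both test $J'$ against $(u_n^-,v_n^-)$, use the pointwise inequality $(a-b)(a^--b^-)\geq (a^--b^-)^2$ for the Gagliardo term, exploit $b\geq 0$ to drop the off-diagonal matrix interaction, and then invoke \eqref{lambda1} and $\mu_2<\lambda_{1,s}$ for coercivity. Your version is slightly more explicit (you expand the Gagliardo cross term, note that the nonlinear terms vanish because $u_n^+u_n^-=0$, and use $\mu_2^+$ to cover the case $\mu_2\leq 0$), but the argument is the same.
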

\begin{proof}
By choosing $\varphi:=u^-\in X(\Omega)$ and 
$\psi:=v^-\in X(\Omega)$ as test functions in \eqref{deriv}
and using the elementary inequality
\begin{equation*}
(a-b)(a^- - b^-)\geq (a^- - b^-)^2,  \,\,\,\quad \text{for all $a,b\in\R$},
\end{equation*}
we obtain 
\begin{align*}
&\int_{\R^{2N}}\frac{(u(x)-u(y))(u^-(x)-u^-(y))+(v(x)-v(y))(v^-(x)-v^-(y))}{|x-y|^{N+2s}} dxdy \\
&\geq \int_{\R^{2N}}\frac{(u^-(x)-u^-(y))^2+(v^-(x)-v^-(y))^2}{|x-y|^{N+2s}} dxdy.
\end{align*}
Now, note that, since $b\geq 0$ and $w^-\leq 0$ and $w^+ \geq 0$, it holds
$$
\int_{\R^N}(A(u,v),(u^-,v^-))_{\R^2} \,dx \leq \int_{\R^N}(A(u^-,v^-),(u^-,v^-))_{\R^2} \,dx. 
$$
In fact, it follows
\begin{align*}
(A(u,v),(u^-,v^-))_{\R^2} &= (A(u^-,v^-),(u^-,v^-))_{\R^2} + b ((v^+)u^- + (u^+)v^-), \\
&\leq (A(u^-,v^-),(u^-,v^-))_{\R^2}.
\end{align*}
In turn, from the formula for $J'(u,v)(u^-,v^-)$, it follows that
\begin{align*}
J'(u,v)(u^-, v^-)& \geq \frac{C(N,s)}{2}\int_{\R^{2N}}\frac{(u^-(x)-u^-(y))^2+(v^-(x)-v^-(y))^2}{|x-y|^{N+2s}} dxdy\\
&- \int_{\Omega}(A(u^-,v^-),(u^-,v^-))_{\R^2} dx\geq I(u^-)+I(v^-),
\end{align*}
where we have set
$$
I(w):=\frac{C(N,s)}{2}\int_{\R^{2N}}\frac{(w(x)-w(y))^2}{|x-y|^{N+2s}} dxdy- \mu_2\int_{\Omega}|w|^2dx=[w]^2_s - \mu_2 \|w\|^2_{L^2(\Omega)}.
$$
On the other hand, by the definition of $\lambda_{1,s}$, we have
$$
I(w)\geq \Big(1-\frac{\mu_2}{\lambda_{1,s}}\Big)[w]^2_s,
$$
which finally  yields the inequality
$$
J'(u,v)(u^-, v^-)\geq \Big(1-\frac{\mu_2}{\lambda_{1,s}}\Big)\big([u^-]^2_s+[v^-]^2_s).
$$
Since $\{(u_n,v_n)\}\subset Y(\Omega)$ is a Palais-Smale sequence, we get
$J'(u_n,v_n)(u_n^-, v_n^-)=o_n(1)$, from which that assertion immediately follows.
\end{proof}

\noindent
From the boundedness of Palais-Smale sequences (see Lemma \ref{bounded}) and compact embedding theorems, passing to a subsequence if necessary,  there exists $(u_0,v_0)\in Y(\Omega)$
which, by Lemma \ref{pos}, satisfies  $u_0,v_0 \geq 0,$ such that
$(u_n,v_n)\rightharpoonup (u_0,v_0)$ weakly in $Y(\Omega)$ as $n\to\infty,$
$(u_n,v_n)\rightarrow (u_0,v_0)$ a.e.\ in $\Omega$ and strongly in $L^{r}(\Omega)$ for
 $1\leq r< 2^{*}_{s}$. Recalling that the sequences 
 $$
 w^{1}_n:=(u_n^+)^{p-1} (v_n^+)^q,\qquad 
 w^{2}_n:=(v_n^+)^{q-1} (u_n^+)^p, \quad p+q=2^*_s,
 $$ 
 are uniformly bounded
in $L^{(2^{*}_{s})'}(\Omega)$ and
converges pointwisely to  
$w^{1}_0=u_0^{p-1} v_0^q$ and
$w^{2}_0=v_0^{q-1} u_0^p$ 
respectively, we obtain 
$$
(w^{1}_n,w^{2}_n) \rightharpoonup (w^{1}_0,w^{2}_0),
\quad \text{weakly in $L^{(2^{*}_{s})'}(\Omega)$, as $n\to\infty$}.
$$
Hence, passing to the limit in
$$
J'(u_n,v_n)(\phi,\psi)=o_n(1), \quad \forall  (\phi,\psi)\in Y(\Omega),\quad 
\text{as $n\to \infty$},
$$
we infer that $(u_0,v_0)$ is a nonnegative weak solution.
Now, to conclude the proof, it is 
sufficient to prove that the solution is nontrivial.
\vskip4pt
\noindent
 {\sc Claim:} $(u_0,v_0)\neq (0,0).$
Notice that if $(u_0,v_0)$ is a solution of system with $u_0=0$, then $v_0=0.$ The same holds for the reversed situation.
\noindent
In fact, suppose $u_0=0$. Then, if $b>0$, it follows $v_0=0$. If,
instead, $b=0$, then 
$c \in \{\mu_1,\mu_2\}< \lambda_{1,s}.$ Since $v_0$ is a solution of equation
$$
(-\Delta)^s  v_0=c v_0 \quad \text{in $\Omega$}, \qquad  
v_0=0\ \text{on $\R^N\setminus\Omega$},
$$ we have that $v_0=0.$ Therefore, we may suppose that $(u_0,v_0)=(0,0).$
Define, as in \cite{bn},
$$
L:= \lim_{n\to \infty}
\int_{\R^{N}}\left(|(-\Delta)^{\frac{s}{2}} u_n|^2+|(-\Delta)^{\frac{s}{2}} v_n|^2\right)dx,
$$
from $J'(u_n,v_n)(u_n,v_n)=o_n(1),$
we get 
$$
\lim_{n\to \infty} \int_{\R^N} (u_n^+)^p(v_n^+)^q  dx= \frac{L}{2}.
$$
Recalling that $J(u_n,v_n)=c+ o_n(1),$ thus
\begin{equation}\label{cc}
c=\frac{sL}{N}.
\end{equation}
\noindent
From the definition of \eqref{quoSpqtil}, we have
\begin{equation}\nonumber
\displaystyle\int_{\R^{N}}\left(|(-\Delta)^{\frac{s}{2}} u_n|^2+|(-\Delta)^{\frac{s}{2}} v_n|^2\right)dx \geq \widetilde{\S}_s
\Big(\displaystyle\int_{\R^{N}}|u_n(x)|^{p}|v_n(x)|^{q}dx \Big)^{\frac{2}{2^*_s} }
\end{equation}
and passing to the limit the inequality above, we get
$$
L \geq \widetilde{\S}_s  \Big(\frac{L}{2}\Big)^{2/2^{*}_{s}}.
$$
Now, combining this estimate with (\ref{cc}), it follows that
\begin{equation}\label{cmais}
c\geq \frac{2s}{N} \Big(\frac{\widetilde{\S}_s}{2}\Big)^{N/2s}.
\end{equation}
Take $B,C>0$ with $B/C=\sqrt{p/q}$ and let $u_\epsilon\geq 0$ as in Proposition \ref{prop2}.
Fix $\eps>0$ sufficiently small that Proposition~\ref{scalar} holds and define
$v_{\epsilon}:=u_\epsilon/\|u_\epsilon\|_{L^{2_s^*}}$. Using the definition of $\S_{s,\lambda}(u)$, for every $t\geq 0$, we obtain
\begin{align*}
J&(tBv_\epsilon,tCv_\epsilon)\\
&\leq
\frac{t^2(B^2+C^2)}{2} \Big(\int_{\R^{N}}|(-\Delta)^{\frac{s}{2}} v_\epsilon|^2dx
- \mu_1\int_{\R^N}|v_\epsilon|^2dx \Big)-\frac{2t^{ 2^{*}_{s}}B^pC^q}{ 2^{*}_{s}}\\
&=\frac{t^2(B^2+C^2)}{2} \S_{s,\mu_1}(u_\epsilon)
-\frac{2t^{ 2^{*}_{s}}B^pC^q}{ 2^{*}_{s}}
:= \psi(t),\;  t\geq 0.
\end{align*}
Thus, an elementary calculation yields
$$
\psi_{\rm max} =\max_{\R^+}\psi =  \dfrac{2s}{N}\left\{\dfrac{(B^2+C^2) }{2(B^p C^q)^{2/ 2^{*}_{s}}}  \S_{s,\mu_1}(u_{\epsilon})\right\}^{N/2s}.
$$
By Lemma \ref{relation} and Proposition \ref{scalar}, we conclude that, for $\epsilon>0$ small,
\begin{align*}
\psi_{\rm max}&<\frac{2s}{N}\left\{\frac{(B^2+C^2) }{2(B^p C^q)^{2/ 2^{*}_{s}}}\S_s\right\}^{N/2s}\\
&=\frac{2s}{N}\left\{\frac{1}{2}\Big[\Big(\frac{p}{q}\Big)^{q/p+q}+\Big(\frac{p}{q}\Big)^{-p/p+q}\Big]\S_s\right\}^{N/2s} \\
&=\frac{2s}{N}\Big(\frac{\widetilde{\S}_s}{2}\Big)^{N/2s}.
\end{align*}
Let now $\gamma\in C([0,1],Y(\Omega))$ be defined by
$$
\gamma(t):=(\tau tBv_\epsilon,\tau tCv_\epsilon),\quad t\in [0,1],
$$
where $\tau>0$ is sufficiently large that $J(\tau Bv_\epsilon,\tau Cv_\epsilon)\leq 0$. Hence, $\gamma\in\Gamma$ and we conclude that
\begin{align*}
c\leq\sup_{t\in [0,1]}J(\gamma(t))\leq\sup_{t\geq 0}J(tBv_\epsilon,tCv_\epsilon)\leq \psi_{\rm max} < \frac{2s}{N}\Big(\frac{\widetilde{\S}_s}{2}\Big)^{N/2s},
\end{align*}
which contradicts \eqref{cmais}.
Hence $(u_0,v_0) \neq (0,0)$ and the proof is complete. 
Finally, that $u_0>0$ and $v_0>0$ follows as in the sub-critical case. \qed

\bigskip

\end{document}